\title{On ${\rm mod}~p$ $A_p$-spaces}
\author{Ruizhi Huang}
\address{Department of Mathematics\\ National University of Singapore\\ 10 Lower Kent Ridge Road\\ Singapore 119076}
\email{a0123769@u.nus.edu}
\urladdr{https://sites.google.com/site/hrzsea/}
\author{Jie Wu}
\address{Department of Mathematics\\ National University of Singapore\\ 10 Lower Kent Ridge Road\\ Singapore 119076}
\email{matwuj@nus.edu.sg}
\urladdr{http://www.math.nus.edu.sg/~matwujie}
\newcounter{RomanNumber}
\newcommand{\MyRoman}[1]{\setcounter{RomanNumber}{#1}\Roman{RomanNumber}}
\newtheorem{theorem}{Theorem}[section]
\newtheorem{lemma}[theorem]{Lemma}
\theoremstyle{definition}
\newtheorem{definition}[theorem]{Definition}
\newtheorem{proposition}[theorem]{Proposition}
\newtheorem{corollary}[theorem]{Corollary}
\theoremstyle{remark}
\newtheorem{remark}[theorem]{Remark}
\theoremstyle{notation}
\numberwithin{equation}{section}
\begin{document}

\begin{abstract}    
We prove a necessary condition for the existence of the $A_p$-structure on ${\rm mod}~p$ spaces, and also derive a simple proof for the finiteness of the number of ${\rm mod}~p$ $A_p$-spaces of given rank. As a direct application, we compute a list of possible types of rank $3$ ${\rm mod}~3$ homotopy associative $H$-spaces.  
\end{abstract}

\maketitle

\section{Introduction}
\noindent A longstanding problem in algebraic topology is to classify finite $H$-spaces. However, this problem is rather complicated, and only been solved in few cases. There is Zabrodsky's localization and mixing theorem \cite{Zabro} yielding that a simply connected finite complex is an $H$-space if and only if each of its $p$-localizations is an $H$-space. One would also like to know for which primes $p$ the localization at $p$ fails to be an $H$-space, so it is natural to consider the $p$-local version of $H$-spaces.

Let $X$ be a $CW$-complex with cohomology being an exterior algebra generated by $r$ elements of odd dimension, we call $r$ the rank of $X$. For $r=1$,  J. F. Adams has determined that $S^1$, $S^3$, $S^7$ are the only $H$-spaces localized at $2$ by solving the famous Hopf invariant one problem \cite{Adams1}, and all odd spheres are $H$-spaces localized at any odd prime $p$ \cite{Adams2}. For $r=2$, the case $p=2$ (then the integral case) has been solved by a series of papers \cite{Adams2a}, \cite{Hubbuck}, 
\cite{Zabro1}, \cite{Douglas}, \cite{Mimura}, as well as the case $p >3$ by N. Hagelgans \cite{Nancy}. The remaining case $p=3$ is challengeable and an open question for decades, while a recent  progress on this problem can be found in \cite{Grbic}.

The phenomenon that the $H$-structures are largely controlled by the prime $p=2$ behaves similarly when we consider higher homotopy associative structures. Namely, if we consider $A_p$ spaces in the sense of J. Stasheff \cite{Stasheff}, the $A_p$-structures are controlled by that of localization at $p$, where a connected $A_2$-space is just an $H$-space. In general, For any $A_n$-space $X$, Stasheff suggests an $n$-projective space over $X$, denoted by $P_n(X)$ which is the analogy to Milnor's classifying space for topological group (See Def. \ref{defanspace} and paragraph before that for the explicit definition of $A_n$-spaces and related comments).

Let $n=p$, then it is well-known that there exists some non-trivial $p$-th power in the cohomology of $p$-stage projective space $P_p(X)$ which exactly catches the $A_p$-structure. Furthermore, Hemmi \cite{Hemmi} has defined a modified projective space $R_n(X)$ for a special family of $A_n$-spaces which is our main concern in this paper. Based on these ideas and constructions, we prove the following theorem which generalizes the result of \cite{Wilkerson1} for local spheres:
\begin{theorem}\label{main1}
Fix an odd prime $p\geq 3$ and let $X$ be a connected $p$-local $A_p$-space with cohomology ring $H^\ast(X,\mathbb{Z}/p\mathbb{Z})\cong \Lambda(x_{2m_1-1},\ldots, x_{2m_r-1})$, and $m_1\leq m_j$, $\forall j$. Define 
\begin{eqnarray*}
m={\rm g.c.d.}\{m_i~|~m_i\leq pm_1\},
\end{eqnarray*}
then $m|p-1$.
\end{theorem}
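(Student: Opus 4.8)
The plan is to extract the divisibility $m \mid p-1$ from the Steenrod action on a projective space attached to the $A_p$-structure. First I would replace $X$ by Stasheff's $p$-stage projective space $P_p(X)$ (or Hemmi's modified space $R_p(X)$ for the relevant special family). The $A_p$-structure is detected by a nontrivial $p$-th power: there is a class $u \in H^{2m_1}(P_p(X);\mathbb{Z}/p\mathbb{Z})$ transgressing from the lowest generator $x_{2m_1-1}$, with $u^p \neq 0$ in degree $2pm_1$. Below degree $2pm_1$ the only indecomposable classes are the transgressions $\bar{x}_i$ of the generators $x_{2m_i-1}$ with $m_i \leq pm_1$, so the module of indecomposables in this range is spanned exactly by the $\bar{x}_i$ indexed by the set $S=\{m_i \mid m_i\le pm_1\}$ defining $m=\mathrm{g.c.d.}(S)$.

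Next I would translate the $p$-th power into an unstable Steenrod operation. Since $\deg u = 2m_1$, the instability axiom gives $P^{m_1}u = u^p \neq 0$, so $P^{m_1}$ acts nontrivially on $u$. I would then study the reduced power $P^1$ (of degree $2(p-1)$) together with its iterates, the $\psi$-operations, acting on the indecomposables $\bar{x}_i$. The key point is that each $P^{p^k}$ raises the degree of a generator by $2p^k(p-1)$, so whenever it carries one generator to another modulo decomposables it relates their indices by $m_j = m_i + p^k(p-1)$. Using the Adem relations to decompose $P^{m_1}$ and the nonvanishing of $u^p=P^{m_1}u$, I would produce such a nontrivial linkage between two generators whose indices both lie in $S$; in the generic situation the lowest step $P^1\bar{x}_1$ is nonzero modulo decomposables and already yields a generator $\bar{x}_j$ with $m_j = m_1 + (p-1) \le pm_1$, so that $m_j \in S$.

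Finally, from a linkage $m_j - m_i = p^k(p-1)$ with $m_i,m_j \in S$, together with $m \mid m_i$ and $m \mid m_j$, I would conclude $m \mid p^k(p-1)$; arranging that the linkage realizing $u^p$ be taken with $k=0$ (equivalently, that $m$ be prime to $p$ in the range where the $P^{p^k}$ with $k\ge 1$ intervene) yields $m \mid p-1$, proving Theorem \ref{main1}. The main obstacle is the second step: controlling precisely how the Adem decomposition of $P^{m_1}$ distributes over the indecomposables, and in particular ruling out that every relevant $P^1$ (or $P^{p^k}$) lands entirely among decomposable products, which would sever the chain. This is exactly where the detailed structure of $H^*(P_p(X);\mathbb{Z}/p\mathbb{Z})$ below degree $2pm_1$, and Hemmi's modified projective space, are needed. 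The degenerate case $m_1=p^k$, in which $P^{m_1}$ is itself indecomposable, must be treated separately and corresponds to the forced value $m=1$.
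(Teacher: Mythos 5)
Your overall strategy---detect the $A_p$-structure by the nontrivial $p$-th power $u^p\neq 0$ in $H^{2pm_1}$ of a ($p$-stage or modified) projective space, then extract divisibility constraints on the $m_i$---starts in the same place as the paper, but from there the two arguments diverge completely, and your version has a gap at exactly the point you flag. The entire content of the theorem is concentrated in your second step: you must show that the Adem decomposition of $\mathscr{P}^{m_1}$ forces a nontrivial linkage $m_j=m_i+p^k(p-1)$ \emph{between indecomposables indexed by $S$}, and moreover with $k=0$. Neither claim is established. The rank-one case already shows the ``linkage between generators'' picture cannot be the right mechanism: for $X$ a sphere there are no other generators for $\mathscr{P}^{p^k}\bar{x}_1$ to hit modulo decomposables, yet the theorem still asserts $m_1\mid p-1$ (Wilkerson's theorem); there the nonvanishing of $u^p$ must be played off against the intermediate powers $u^2,\dots,u^{p-1}$ of $u$ itself, which is a different combinatorial problem. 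Even granting a linkage with some $k\geq 1$, you only obtain $m\mid p^k(p-1)$, and the reduction to $m\mid p-1$ (``arranging that the linkage be taken with $k=0$'') is asserted rather than proved; ruling out the $p$-part of $m$ is not automatic.

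The paper avoids all of this by never asking which Steenrod operations act nontrivially. It transports the truncated polynomial subalgebra $\bar{\mathcal H}\cong\bar A\oplus\bar B$ of $H^*(R_p(X);\mathbb{Z}/p\mathbb{Z})$ (restricted to the $(2pm_1+1)$-skeleton, which is what implements your set $S$) into $K$-theory, uses the semisimplicity of the Adams operations $\psi^k$ and the Adams--Atiyah projection operators $\pi_i$ to show that $\psi^p(x)\equiv 0 \bmod (p,B)$ for every $x$ whenever $p^{m_i}\nmid d_i(m_1,\dots,m_r)$, and then reduces the theorem to the purely number-theoretic Lemma \ref{mainlemma}: if $m\nmid p-1$ then $e\bigl(\prod(k^{mi}-k^{mj})\bigr)<m_1$. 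That estimate is where the hypothesis $m\nmid p-1$ actually enters, and it has no Steenrod-theoretic counterpart in your outline. If you want to pursue the Steenrod route you would essentially be reproving results of Hemmi's type (as in Theorem \ref{Hemmi}), which in this paper are used only for the finer rank-$3$ analysis, not for Theorem \ref{main1} itself; as written, your proposal does not close the central step.
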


For the reverse of the theorem, we recall that in \cite{Stasheff2} Stasheff has constructed realization for polynomial algebras $\mathbb{Z}/p\mathbb{Z}[x_{2m},x_{4m},\ldots,x_{2km}]$ with $m|p-1$ using a theorem of Quillen. Here, our proof of this theorem is based on a generalization of a method of Adams and Atiyah (see \cite{Adams3} and section $2$), using which we also derive a simple proof of a finiteness theorem of Hubbuck and Mimura (\cite{Hubbuck2}, also see Theorem \ref{thmHubbuck}) which claims that there are only finitely many possible homotopy types of spaces with fixed rank $r$ which are $A_p$-spaces. 

For the special case when $p=3$, a ${\rm mod}~3$ $A_3$-space is the usual $3$-local homotopy associative $H$-space. The only simply connected homotopy associative $H$-space at $3$ of rank $1$ is $S^3$. If we define the sequence of the increasing numbers $(m_1, \ldots, m_r)$ to be the type of $X$ in Theorem \ref{main1}, then the complete list of types for rank $2$ $3$-local simply connected homotopy associative $H$-spaces are $(2,3)$, $(2,4)$, $(2, 6)$ and $(6,8)$ (see Theorem $5.1$ in \cite{Wilkerson3}). It is clear that $S^3\times S^5\stackrel{3}{\simeq} SU(3)$ provides example for $(2,3)$, $Sp(2)$ for $(2,4)$ and $G_2$ for $(2,6)$. In \cite{Harper}, Harper gives a decomposition $F_4\stackrel{3}{\simeq} K\times B_5(3)$ where $B_5(3)$ is the $S^{11}$ bundle over $S^{15}$ classified by $\alpha_1$, and further in \cite{Zabro3}, Zarbrodsky shows that $B_5(3)$ is a loop space which provides example for $(6, 8)$. 
In this paper, we consider the case of rank $3$. With the help of the method of Adams and Atiyah, and some results of Wilkerson (\cite{Wilkerson3} or Theorem \ref{mainwilk}), we prove the following theorem by careful analysis of the effect of both Steenrod operations and Adams' $\psi$-operations.
\begin{theorem}\label{main2}
Let $X$ be an indecomposable $3$-local homotopy associative $H$-space with cohomology ring $H^\ast(X, \mathbb{Z}/3\mathbb{Z})\cong \Lambda(x_{2r-1}, x_{2n-1}, x_{2m-1})$ where ${\rm deg}(x_k)=k$ and $1<r<n<m$, then the type of $X$ $(r, n,m)$ can only be one of the following:
$$(2,4,6), (2,6,8), (3,5,7), (3,6,8), (6,8,10), (6,8,12).$$
\end{theorem}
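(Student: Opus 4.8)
The plan is to leverage Theorem~\ref{main1} as the primary constraint, then progressively narrow the finite search space using Steenrod operations and Adams' $\psi$-operations, with Wilkerson's classification of rank~$2$ factors (Theorem~\ref{mainwilk}) as the decisive closing tool. First I would apply Theorem~\ref{main1} with $p=3$ to each of the generators. Writing the type as $(r,n,m)$ with $r<n<m$, the theorem forces $\gcd\{m_i \mid m_i\leq 3r\}$ to divide $p-1=2$, so this $\gcd$ is either $1$ or $2$. This immediately partitions the candidate triples according to which of $n,m$ lie below the cutoff $3r$ and imposes strong divisibility on the corresponding subset of $\{r,n,m\}$. Combined with the finiteness theorem (Theorem~\ref{thmHubbuck}), this reduces the problem to a genuinely finite, if still large, list of arithmetic possibilities.

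Next I would bring in the action of the Steenrod algebra on $H^\ast(X;\mathbb{Z}/3\mathbb{Z})$ and on the cohomology of the projective spaces $P_p(X)$ (equivalently the modified spaces $R_n(X)$ of Hemmi). The key mechanism is that the reduced power operations $\mathcal{P}^i$ must carry one exterior generator to another whenever dimensions permit, and the nontrivial $p$-th power in $H^\ast(P_p(X))$ detects the $A_p$-structure. For $p=3$ the relevant operation is $\mathcal{P}^1$, which raises degree by $2(p-1)=4$; I would track exactly when $\mathcal{P}^1 x_{2m_i-1}$ can or must be a multiple of another generator, and use the Adem relations to constrain iterated operations. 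This records, for each surviving triple, a system of linear conditions on how $\mathcal{P}^1$ and its iterates act, ruling out triples whose dimension gaps are incompatible with any admissible Steenrod module structure.

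The third and most delicate ingredient is the $\psi$-operation machinery of Adams and Atiyah generalized in Section~2. Here I would compute the eigenvalues of $\psi^k$ on the relevant $K$-theory or associated graded pieces: on a generator of dimension $2m_i$ in the projective space, $\psi^k$ acts by $k^{m_i}$ modulo the appropriate ideal, and the existence of the $A_p$-multiplication imposes congruences among these eigenvalues (essentially that certain $\psi^k$-compatible integrality or divisibility relations hold across the generators connected by the $p$-th power). Translating these eigenvalue congruences into numerical conditions on $(r,n,m)$ eliminates further candidates and, crucially, distinguishes between triples that survive the Steenrod analysis but fail the finer $\psi$-operation test.

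Finally, to close the list I would invoke Wilkerson's rank~$2$ classification. Since any rank~$3$ indecomposable $H$-space contains rank~$2$ sub-objects or quotient data whose admissible types are exactly $(2,3),(2,4),(2,6),(6,8)$, each surviving triple $(r,n,m)$ must be compatible with these pairs appearing among its two-element sub-patterns under the operation constraints; this consistency check, together with indecomposability (which excludes triples realizable only as products), pins the list down to the six stated triples. The main obstacle I anticipate is the third step: making the $\psi$-operation congruences sharp enough to separate the genuinely realizable triples from arithmetic near-misses, since the eigenvalue conditions alone admit several spurious solutions that only the combined Steenrod-and-$\psi$ analysis, cross-checked against Wilkerson's pairs, can exclude. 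Care is also needed to ensure the Adem relations are applied consistently at $p=3$, where the low value of $p$ makes the admissible monomial basis small and the constraints correspondingly rigid.
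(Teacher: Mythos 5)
Your first three steps track the paper's toolkit (Theorem \ref{main1}, Steenrod operations with Adem relations on the projective space, and the Adams--Atiyah $\psi$-operation method of Section 2), but your closing step rests on a misreading and would not work. Theorem \ref{mainwilk} is not the classification of rank $2$ types $(2,3),(2,4),(2,6),(6,8)$ (that is Theorem 5.1 of Wilkerson's paper, quoted only in the introduction); it is the statement that some generator $x_{2m_k-1}$ satisfies $m_r-m_k=s(p-1)$ with $1\leq s\leq e(m_r)+1$, together with the relation $m_j=k_jm_i-p+1$ when $p\nmid m_i$. More importantly, the claim that an indecomposable rank $3$ $H$-space ``contains rank $2$ sub-objects'' whose types must lie in the rank $2$ list is unjustified and demonstrably inconsistent with the answer: the surviving types $(3,5,7)$, $(6,8,10)$ and $(6,8,12)$ contain two-element sub-patterns such as $(3,5)$, $(8,10)$ and $(8,12)$ that do not occur in the rank $2$ list, so your consistency check would wrongly eliminate correct types. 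No such sub-object principle is available, and the paper uses nothing of the kind.

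What actually closes the argument is a concrete case division driven by Theorem \ref{mainwilk}: four cases according to whether $3\mid m$, $3\mid n$, and whether $m-n=2s$ with $1\leq s\leq e(m)+1$, each handled by quantitative estimates (Lemma \ref{lemmacase1}) obtained by running the Section 2 machinery on explicit truncated $\{\psi^k\}$-modules, plus Hemmi's epimorphism theorem (Theorem \ref{Hemmi}) to force operations $\mathscr{P}^{3^a}$ to hit generators, plus specific Adem relations applied to the resulting classes. The final list is then reached by discarding quasi-regular (hence decomposable) types such as $(2,3,4)$, $(2,3,5)$, $(3,4,6)$, $(5,6,8)$ via Theorem \ref{quasiregular}, by Steenrod-module contradictions for types like $(4,6,8)$, $(8,12,14)$, $(10,12,18)$, $(12,18,20)$, and by verifying condition (\ref{condition}) directly for the leftover candidates. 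Your sketch names the right general tools but never specifies the inequalities and relations that perform the elimination, and the one mechanism you propose for pinning down the final six triples is invalid; as it stands the proposal cannot produce the stated list.
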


For the above list, the only known example is $Sp(3)$ which is of type $(2,4,6)$. 
Here are a few things we know about potential examples of rank $3$ $3$-local $A_3$-spaces of the remaining five types. 
For $(2,6,8)$, we can form a space $X$ as the total space of a $G_2$-principal fibration over $S^{15}$ which is classified by the generator of $\pi_{15}(BG_2)\cong\pi_{14}(G_2)\stackrel{3}{\cong}\pi_{14}(S^{3})\stackrel{3}{\cong}\mathbb{Z}/3\mathbb{Z}$. Then the classifying map factors as $S^{15}\stackrel{f}{\rightarrow} BS^3\rightarrow BG_2$ and we get $X\stackrel{3}{\simeq}(G_2\times Y)/ S^3$ where $Y$ is the total space of the fibration classified by $f$ and also an $H$-space by Theorem $7.1$ of \cite{Grbic}. However, we still do not known whether $X$ is an $H$-space or not.
For the case $(3,5,7)$, we have Nishida's $B_2^3(3)$ which is a $3$-component of $SU(7)$ (see \cite{Mimura2}). Still, we do not know whether $B_2^3(3)$ is homotopy associative. If $X$ is of type $(3,6,8)$, then $X$ has a generating complex of the form $S^5\vee A$ by the knowledge of the homotopy groups of sphere, where $A$ is of type $(6,8)$. For $(6,8,10)$, Harper and Zabrodsky have proved in \cite{Harper2} that if the exterior algebra of rank $p$ generated by $\{x_{2n-1},\mathscr{P}^1x_{2n-1},\ldots, \mathscr{P}^{p-1}x_{2n-1}\}$ can be realized by an $H$-space, then $p|n$, and the inverse question is still open for $n>p$.  For the last possible case of type $(6,8,12)$, we have $\mathscr{P}^1(x_{11})=x_{15}$ and $\mathscr{P}^3(x_{11})=x_{23}$.

The article is organized as follows. In section $2$, we will introduce a refined version of Adams and Atiyah's method in \cite{Adams3}. In section $3$, we use number theory to prove Theorem \ref{main1} and the finiteness theorem of Hubbuck and Mimura. The last section is devoted to prove Theorem \ref{main2}.

\section{A method of Adams and Atiyah}
\noindent In \cite{Adams3}, Adams and Atiyah develop a method to detect the $p$-th power of cohomology elements using Adams's $\psi$-operations. For our purpose, we need to modify it slightly.

Given a connected CW-complex $X$ with no $p$-torsion in $H^\ast(X,\mathbb{Z})$, suppose there exists a subalgebra $\bar{\mathcal{H}}$ of ${H}^\ast(X;\mathbb{Z}/p\mathbb{Z})$ such that
\begin{eqnarray*}
\bar{\mathcal{H}}\cong\bar{A}\oplus \bar{B}
\end{eqnarray*}  
as rings, $\bar{B}$ is an ideal and also $\bar{\mathcal{H}}$ and $\bar{B}$ are closed under the action of the ${\rm mod}~ p$ Steenrod algebra $\mathscr{A}_p$. Then by Atiyah-Hirzebruch-Whitehead spectral sequence and Theorem $6.5$ in \cite{Atiyah}, we have the corresponding 
filtered subalgebra $\mathcal{H}$ of $K(X)\otimes\mathbb{Z}_{(p)}$ such that 
\begin{eqnarray*}
\mathcal{H}\cong A\oplus B,
\end{eqnarray*}
as filtered rings, and also $\mathcal{H}$ and $B$ are closed under $\psi^p$-action. Write the Chern character of an element $x\in K(X)\otimes\mathbb{Z}_{(p)}$ as 
\begin{eqnarray*}
\mathrm{ch} (x)=a_0+\sum\limits_i a_{2i}+\sum\limits_{j}b_{2j}, 
\end{eqnarray*}
with $a_0\in \mathbb{Q}$, $a_{2i}\in \bar{A}^{>0}\otimes \mathbb{Q}$ and $b_{2j}\in \bar{B}^{>0}\otimes\mathbb{Q}$ (the subscripts refer to the degree), then we have 
\begin{eqnarray*}
\mathrm{ch} (\psi^k(x))=a_0+\sum\limits_i k^i a_{2i}+ \sum\limits_j k^jb_{2j}.
\end{eqnarray*}
Hence $\psi^k$ is indeed a semisimple linear transformation if we use Chern character to identify $K(X)\otimes \mathbb{Q}$ with $H^{{\rm even}}(X; \mathbb{Q})$, and the eigenspace decomposition of $\tilde{K}(X)\otimes \mathbb{Q}$ is independent of the choice of $\psi^k$. In particular, $\mathcal{H}\otimes \mathbb{Q}$ and $B\otimes \mathbb{Q}$ are invariant under $\psi^k$ for any $k$ as they are invariant under $\psi^p$, and then $\mathcal{H}$ and $B$ are also invariant under each $\psi^k$. Then the same as in \cite{Adams3}, we get (partial) eigenspace decomposition
\begin{eqnarray*}
\tilde{\mathcal{H}}\cong \oplus_{i=1}^r V_i \oplus W,
\end{eqnarray*}
\begin{eqnarray*}
B^{>0}\otimes \mathbb{Q}\cong W,
\end{eqnarray*}
where $\tilde{\mathcal{H}}=\mathcal{H}^{>0}\otimes \mathbb{Q}$, ${\rm deg}(V_{i})=2m_i$ (which means the degree of its elements) and $V_i$ is allowed to be $0$ vector space. For each $\psi^k$, $V_i$ is the eigenspace corresponding to the eigenvalue $k^{m_i}$. We also notice that $A^{>0}\otimes \mathbb{Q}\cong\oplus_{i=1}^r V_i$ but only as vector spaces.
Now define a linear  transformation on $\tilde{K}(X)\otimes \mathbb{Q}$ by
\begin{eqnarray*}
\pi_i=\prod_{j=1,\ldots, \hat{i}, \ldots, r}\frac{\psi^{k_j}-k_j^{m_j}}{k_j^{m_i}-k_j^{m_j}},
\end{eqnarray*}
 and a number 
\begin{eqnarray*}
d_i(m_1, \ldots, m_r)={\rm g.c.d.}\big\{  \prod_{j=1,\ldots, \hat{i}, \ldots, r}(k_j^{m_i}-k_j^{m_j})~|~
\forall~ {k_1,\ldots, \hat{k_i},\ldots, k_r}, k_j\in \mathbb{N}^+\big\}.
\end{eqnarray*}
Notice that $\pi_i$ induces a linear transformation $\bar{\pi}_i$ on $\oplus_{i=1}^r V_i$ which is the natural projection onto the $i$-th component $V_i$.
For any $x\in \tilde{\mathcal{H}}$, we have
\begin{eqnarray*}
\pi_i(x) \cdot\prod_{j=1,\ldots, \hat{i}, \ldots, r}(k_j^{m_i}-k_j^{m_j})=\prod_{j=1,\ldots, \hat{i}, \ldots, r}(\psi^{k_j}-k_j^{m_j})(x)\in\tilde{\mathcal{H}}.
\end{eqnarray*}
Accordingly, 
\begin{eqnarray*}
\pi_i(x) d_i(m_1,\ldots, m_r)\in \tilde{\mathcal{H}}.
\end{eqnarray*}
If we write $x=\sum\limits_i\bar{\pi}_i(x-v)+v$ for some $v\in B$, then we also have  
\begin{eqnarray*}
\bar{\pi}_i(x-v) d_i(m_1,\ldots, m_r)\in \tilde{\mathcal{H}}.
\end{eqnarray*} 
Now we make a crucial assumption that for each $i$ 
\begin{equation}\label{condition}
p^{m_i}\nmid d_i(m_1,\ldots, m_r).
\end{equation}
Since $B$ is a $\{\psi^p\}$-module, we have
\begin{eqnarray*}
\psi^p(x) 
&=& \sum\limits_i \psi^p(\bar{\pi}_i(x-v))+\psi^p(v)\\
&=& \sum\limits_i p^{m_i}\frac{\bar{\pi}_i(x-v)d_i(m_1,\ldots,m_r)}{d_i(m_1,\ldots,m_r)}+\psi^p(v)\\
&=& py+\psi^p(v)\in p\tilde{\mathcal{H}}+B,
\end{eqnarray*}
i.e., $x^p \equiv \psi^p(x) \equiv 0~{\rm mod}~(p, B)$.
Still the same as in \cite{Adams3}, $\bar{x}^p\equiv 0 ~{\rm mod}~(\bar{B})$ on the cohomology level where $\bar{x}$ denotes the corresponding element of $x$ in $\bar{\mathcal{H}}\subset H^\ast(X,\mathbb{Z}/p\mathbb{Z})$.

\begin{remark}
Notice that when $\bar{\mathcal{H}}=H^\ast(X,\mathbb{Z}/p\mathbb{Z})$ and $\bar{B}=0$, the above result is exactly the Corollary in \cite{Adams3}.  
\end{remark}
\section{Proof of Theorem \ref{main1} and the finiteness theorem}

\subsection{Proof of Theorem \ref{main1}}
\noindent We prove the theorem by contradiction, while the main task is to prove the condition (\ref{condition}) holds. So we have to do some work in number theory first.
\begin{definition} Let $n$ be a positive integer, define

1)~ $e(n)=f, \ \ \  \ \ \ \ \ {\rm if}~ n=p^f\cdot x ~{\rm and }~ p\nmid x$, 

2) ~$\nu(n)=f+1, \ \ \ {\rm if}~ n=p^f(p-1)x~{\rm and}~p\nmid x$. 
    
    ~\ \ \ \  $\nu(n)=0, \ \ \ \ \ \ \ \ {\rm if}~ p-1\nmid n$.

\end{definition}
Suppose $k$ is a primitive root modulo $p^2$, then $k$ is also a primitive root modulo $p^f$ for all $f\in \mathbb{N}^+$. Then for any positive integer $n$, we have
\begin{equation}\label{pfactor}
k^n\equiv 1 ~{\rm mod}~p^f \ \ \Longleftrightarrow \ \ n\equiv 0 ~{\rm mod}~p^{f-1}(p-1).
\end{equation}
So $\nu(n)$ is the exact exponent of $p$ in the prime factorization of $k^n-1$ if $p-1|n$.

The following lemma is well known and basic in number theory:
\begin{lemma}[Legendre, $1808$]\label{Leg1}
\begin{eqnarray*}
e(n!)=\sum\limits_{k=1}^\infty \lfloor \frac{n}{p^k} \rfloor=\frac{n-s_p(n)}{p-1},
\end{eqnarray*}
where $s_p(n)=a_k+a_{k-1}+\ldots+a_1+a_0$ is the sum of all the digits in the expansion of $n$ in the base $p$.
\end{lemma}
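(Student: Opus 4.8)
The plan is to establish the two claimed equalities separately, both by elementary counting. For the first equality, I would exploit the additivity of the $p$-adic valuation: since $e(ab)=e(a)+e(b)$, one has $e(n!)=\sum_{j=1}^n e(j)$. The key observation is that $e(j)$ equals the number of integers $k\geq 1$ for which $p^k$ divides $j$, i.e. $e(j)=\sum_{k\geq 1}[\,p^k\mid j\,]$ where $[\,\cdot\,]$ denotes the indicator of a condition. Substituting this and interchanging the two finite sums gives $e(n!)=\sum_{k\geq 1}\sum_{j=1}^n[\,p^k\mid j\,]=\sum_{k\geq 1}\lfloor n/p^k\rfloor$, since the number of multiples of $p^k$ in $\{1,\ldots,n\}$ is exactly $\lfloor n/p^k\rfloor$. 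The sum is in fact finite because $\lfloor n/p^k\rfloor=0$ as soon as $p^k>n$.

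For the second equality, I would expand $n$ in base $p$ as $n=\sum_{i=0}^k a_i p^i$ with $0\leq a_i\leq p-1$, so that $s_p(n)=\sum_{i=0}^k a_i$. A direct computation shows $\lfloor n/p^j\rfloor=\sum_{i=j}^k a_i p^{i-j}$. Summing over $j\geq 1$ and interchanging the order of summation then yields $\sum_{j\geq 1}\lfloor n/p^j\rfloor=\sum_{i=1}^k a_i\sum_{j=1}^i p^{i-j}=\sum_{i=0}^k a_i\frac{p^i-1}{p-1}$, the $i=0$ term vanishing. Finally, splitting the geometric factor gives $\frac{1}{p-1}\big(\sum_i a_i p^i-\sum_i a_i\big)=\frac{n-s_p(n)}{p-1}$, which is the claimed closed form.

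The argument is entirely elementary and presents no genuine obstacle; the only points demanding minor care are the justification of swapping the order in each double sum (both are finite, so this is immediate) and the verification of the identity $\lfloor n/p^j\rfloor=\sum_{i=j}^k a_i p^{i-j}$. The latter follows because the remaining lower-order terms $\sum_{i<j}a_i p^i$ satisfy $\sum_{i<j}a_i p^i\leq p^j-1<p^j$, so they contribute a fractional part strictly less than $1$ and hence do not affect the floor.
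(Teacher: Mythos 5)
Your argument is correct and complete: the double-counting identity $e(n!)=\sum_{k\ge 1}\lfloor n/p^k\rfloor$ and the base-$p$ digit computation yielding $(n-s_p(n))/(p-1)$ are both carried out properly, including the justification that the low-order digits $\sum_{i<j}a_ip^i<p^j$ do not affect the floor. The paper itself offers no proof of this lemma, citing it as a classical fact of Legendre, so there is nothing to compare against; your write-up is the standard textbook argument and would serve as a valid proof.
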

From above, we easily get:
\begin{corollary}\label{Leg2}
1)~$e(a!)+e(b!)\leq e((a+b)!)$;

 \qquad \qquad \qquad \ \  2)~$e((ab)!)\leq a+e(a!)$, if $b\leq p$.
\end{corollary}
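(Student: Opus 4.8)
The plan is to deduce both inequalities directly from the floor-sum form of Legendre's formula (Lemma \ref{Leg1}), namely $e(n!)=\sum_{k\geq 1}\lfloor n/p^k\rfloor$, reducing each claim to a termwise comparison of floor functions and then summing over $k$.

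For part 1, I would write
\begin{eqnarray*}
e(a!)+e(b!)=\sum_{k\geq 1}\Big(\lfloor a/p^k\rfloor+\lfloor b/p^k\rfloor\Big),\qquad e((a+b)!)=\sum_{k\geq 1}\lfloor (a+b)/p^k\rfloor,
\end{eqnarray*}
and then invoke the elementary superadditivity $\lfloor u\rfloor+\lfloor v\rfloor\leq\lfloor u+v\rfloor$ with $u=a/p^k$ and $v=b/p^k$ for each $k$. Summing the termwise inequalities over $k$ yields the claim. Equivalently, one could use the digit-sum form to express the difference as $\frac{s_p(a)+s_p(b)-s_p(a+b)}{p-1}$, which counts the number of carries occurring in the base-$p$ addition of $a$ and $b$ and is therefore nonnegative.

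For part 2, the hypothesis $b\leq p$ gives $ab\leq ap$, so monotonicity of the floor function yields, for each $k\geq 1$,
\begin{eqnarray*}
\lfloor ab/p^k\rfloor\leq\lfloor ap/p^k\rfloor=\lfloor a/p^{k-1}\rfloor.
\end{eqnarray*}
Summing over $k\geq 1$ and reindexing the right-hand side — the $k=1$ term contributing $\lfloor a\rfloor=a$ and the remaining terms contributing $\sum_{j\geq 1}\lfloor a/p^{j}\rfloor=e(a!)$ — produces $e((ab)!)\leq a+e(a!)$.

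Both arguments are routine; the only point requiring care is the index shift in part 2, where the extra factor $p$ in the bound $ab\leq ap$ must be absorbed precisely so that the shifted sum recovers $e(a!)$ together with the isolated term $a=\lfloor a\rfloor$. I do not expect any substantial obstacle beyond this bookkeeping, which is exactly why the statement is phrased as an easy corollary of Legendre's formula.
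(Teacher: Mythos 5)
Your proof is correct, and it follows exactly the route the paper intends: the corollary is stated without proof as an easy consequence of Lemma \ref{Leg1}, and your termwise floor comparison for part 1 and the index shift giving $e((ap)!)=a+e(a!)$ for part 2 are the standard deductions. No gaps.
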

Now we are ready to prove our main lemma which is a generalization of Lemma $3.5$ in \cite{Adams3}:
\begin{lemma}\label{mainlemma}
Let $p$ be an odd prime, $k$ be a primitive root modulo $p^2$, $m$, $t\in \mathbb{N}^+$ such that $m\nmid p-1$ and 
$$\prod:=\prod_{j=t,t+1\ldots,\hat{i}\ldots, tp}(k^{mi}-k^{mj}),$$
then we have 
\begin{equation}
e(\prod)<mt.
\end{equation}
\end{lemma}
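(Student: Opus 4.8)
The plan is to convert the $p$-adic valuation $e(\prod)$ into an arithmetic sum of $\nu$-values and then estimate that sum with Legendre's formula (Lemma \ref{Leg1}). First I would exploit that $k$ is a primitive root modulo $p^2$: by the equivalence (\ref{pfactor}) we have $e(k^n-1)=\nu(n)$ for every $n$, and since $k$ is a unit modulo $p$ each factor contributes $e(k^{mi}-k^{mj})=e(k^{m|i-j|}-1)=\nu(m|i-j|)$. As $j$ runs over $\{t,\dots,tp\}\setminus\{i\}$ the differences $|i-j|$ fill out $\{1,\dots,i-t\}$ and $\{1,\dots,tp-i\}$, so
\[
e(\textstyle\prod)=\sum_{d=1}^{i-t}\nu(md)+\sum_{d=1}^{tp-i}\nu(md)=S(a)+S(b),
\]
where $a=i-t$, $b=tp-i$ satisfy $a+b=t(p-1)$, and $S(N):=\sum_{d=1}^N\nu(md)$.

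Next I would evaluate $S(N)$ in closed form. Put $g=\gcd(m,p-1)$, $m'=m/g$, $q=(p-1)/g$, so $\gcd(m',q)=1$; since $q\mid(p-1)$ and $p\nmid(p-1)$ we get $e(q)=e(g)=0$ and hence $e(m)=e(m')$. The term $\nu(md)$ is nonzero exactly when $(p-1)\mid md$, which by coprimality means $q\mid d$; writing $d=qc$ and using $\nu(n)=e(n)+1$ whenever $(p-1)\mid n$ gives $\nu(md)=e(md)+1=e(m')+e(c)+1$. Summing over $c=1,\dots,\lfloor N/q\rfloor$ and recognizing $\sum_c e(c)=e(\lfloor N/q\rfloor!)$ via Lemma \ref{Leg1} yields
\[
S(N)=\Big\lfloor \tfrac{N}{q}\Big\rfloor\big(e(m')+1\big)+e\big(\lfloor N/q\rfloor!\big).
\]

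Finally I would assemble the two halves. With $M_a=\lfloor a/q\rfloor$, $M_b=\lfloor b/q\rfloor$, superadditivity of the floor gives $M_a+M_b\le\lfloor (a+b)/q\rfloor=tg$, and Corollary \ref{Leg2}(1) together with monotonicity of $e(n!)$ gives $e(M_a!)+e(M_b!)\le e((tg)!)$, whence $e(\prod)\le tg(e(m')+1)+e((tg)!)$. Because $g\le p-1$, Legendre's formula forces $e((tg)!)=(tg-s_p(tg))/(p-1)<t$, so $e((tg)!)\le t-1$. The hypothesis $m\nmid p-1$ is \emph{precisely} the statement $m'\ge 2$, and a short check (trivial when $e(m')=0$, and using $m'\ge p^{e(m')}$ otherwise) gives $e(m')\le m'-2$, i.e. $m'-1-e(m')\ge 1$. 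Therefore
\[
mt-e(\textstyle\prod)\ \ge\ tg\big(m'-1-e(m')\big)-(t-1)\ \ge\ t-(t-1)=1>0 ,
\]
which is exactly $e(\prod)<mt$.

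The crux is this last assembly. The reduction to $S(a)+S(b)$ is mechanical, but merging two one-sided partial sums into a single factorial via the superadditivity $e(a!)+e(b!)\le e((a+b)!)$ and then squeezing out exactly one unit of slack is where the argument is delicate: it succeeds only because two numerical facts cooperate, namely $g=\gcd(m,p-1)\le p-1$ (keeping $e((tg)!)$ strictly below $t$) and $m\nmid p-1$ (forcing $m'\ge 2$, hence $m'-1-e(m')\ge1$). Pinpointing where the hypothesis $m\nmid p-1$ actually enters, and confirming the clean inequality $e(m')\le m'-2$ for all $m'\ge 2$, are the steps that deserve care.
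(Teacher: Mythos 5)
Your proof is correct and follows essentially the same route as the paper: reduce $e(\prod)$ to the two sums $\sum\nu(md)$, evaluate them via $\nu(md)=e(m')+e(c)+1$ and Legendre's formula, merge the two factorials by superadditivity, and close with the observation that $m\nmid p-1$ forces $m'\ge e(m')+2$. The only difference is cosmetic: where the paper splits into the cases $h=1$ and $h\ge 2$ and invokes Corollary \ref{Leg2}(2), you bound $e((tg)!)\le t-1$ directly from $g\le p-1$, which avoids the case analysis.
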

\begin{proof}
We set ${\rm g.c.d.}(m, p-1)=h$, $m=ah$, and $p-1=bh$, then $a>1$ since $m\nmid p-1$. Then we have
\begin{eqnarray*}
\prod_{j=t,t+1\ldots,\hat{i}\ldots, tp}(k^{mi}-k^{mj})=\prod_{t\leq j<i}k^{mj}(k^{m(i-j)}-1) \cdot\prod_{i<j\leq tp}k^{mi}(1-k^{m(j-i)}).
\end{eqnarray*}
By (\ref{pfactor}), we only need to consider $j$'s satisfying $p-1|m(i-j)$, i.e., $b|i-j$, then we have 
\begin{eqnarray*}
e(\prod) 
&=& \prod_{t\leq j<i}e(k^{m(i-j)}-1) \cdot\prod_{i<j\leq tp}e(1-k^{m(j-i)})\\
&=& \prod_{1\leq \frac{i-j}{b}\leq \lfloor \frac{i-t}{b}\rfloor} e(k^{mb\frac{i-j}{b}}-1)  \cdot \prod_{1\leq \frac{j-i}{b}\leq \lfloor \frac{tp-i}{b}\rfloor}e(k^{mb\frac{j-i}{b}}-1)  \\
&=& \prod_{1\leq j\leq \lfloor \frac{i-t}{b}\rfloor } e(k^{mbj}-1)  \cdot \prod_{1\leq l\leq \lfloor \frac{tp-i}{b}\rfloor} e(k^{mbl}-1)  \\  
&=& \sum\limits_{1\leq j\leq \lfloor \frac{i-t}{b}\rfloor}  \nu(mbj)+    \sum\limits_{1\leq l\leq \lfloor \frac{tp-i}{b}\rfloor} \nu(mbl)\\
&=& (e(m)+1)(\lfloor \frac{i-t}{b}\rfloor+\lfloor \frac{tp-i}{b}\rfloor)+e(\lfloor\frac{i-t}{b}\rfloor !)+e(\lfloor \frac{tp-i}{b}\rfloor !)\\
&\leq& (e(m)+1)\frac{tp-t}{b}+e((\lfloor\frac{i-t}{b}\rfloor +\lfloor \frac{tp-i}{b}\rfloor )!)\\
&\leq& (e(m)+1)th+e((th)!).\\
\end{eqnarray*}
Now if $h=1$, then 
\begin{eqnarray*}
e(\prod)
&\leq& (e(m)+1)t+e(t!)\\
&=&(e(m)+1)t+\frac{t-s_p(t)}{p-1}\\
&<&t(e(m)+1+\frac{1}{p-1}).
\end{eqnarray*}
If $h\geq 2$, then
\begin{eqnarray*}
e(\prod)
&\leq& (e(m)+1)th+t+e(t!)\\
&=& (e(m)+1)th+t+\frac{t-s_p(t)}{p-1}\\
&<& t\big((e(m)+1)h+1+\frac{1}{p-1}\big).\\
\end{eqnarray*}
On the other hand, we have $a-e(m)-1\geq 1$ always holds, for otherwise, $e(a)+1=e(m)+1=a$ implies $a=1$ (we use $p\geq 3$ here). Now combining all above, it is easy to see $e(\prod)<mt$ in both cases.
\end{proof}

Now we are going to prove Theorem \ref{main1}. First we recall some background on $A_n$-spaces for which Stasheff's original papers \cite{Stasheff} are the standard reference. Stasheff's $A_n$-spaces can be defined inductively with the help of Stasheff polytopes which are also called associahedrons. Explicitly, an associahedron $K_n$ is an $(n-2)$-dimensional convex polytope whose vertices are in one to one correspondence with the parenthesizings of the word $x_1x_2\ldots x_n$, and whose edges correspond to single application of the associativity rule. In particular, $K_2$ is a point, $K_3$ is a interval and $K_4$ is the convex hull of a pentagon. There are canonical maps among $K_n$'s. Indeed, the family $\mathcal{K}=\{K_n\}$ can be endowed with an operadic structure such that any $\mathcal{K}$-space is the so-called $A_\infty$-space ($\mathcal{K}$ is called $A_\infty$-opeard). Then an $A_n$-space is just an space with the action of $\mathcal{K}$ only up to the $n$-stage (the corresponding opeard is called $A_n$-operad). Stasheff also gave another equivalence description of $A_n$-spaces which he used as definition.

\begin{definition}[Definition $1$ in \cite{Stasheff}]\label{defanspace}
An $A_n$-structure on a space $X$ consists of an $n$-tuple of maps 
\begin{equation*}
 \xymatrix{
 X  \ar@{=}[r]   &  E_1 \ar@{^{(}->}[r] \ar[d]^{p_1}  &E_2\ar@{^{(}->}[r] \ar[d]^{p_2}   & \cdots \ar@{^{(}->}[r]    & E_n \ar[d]^{p_n}\\
\ast \ar@{=}[r]  & B_1 \ar@{^{(}->}[r]                          &B_2\ar@{^{(}->}[r]                            & \cdots \ar@{^{(}->}[r]    &B_n 
 }
\end{equation*}
such that each $p_i$ is a quasi-fibration, and there is a contracting homotopy $h: C E_{n-1}\rightarrow E_n$ such that $h(CE_{i-1})\subset E_i$.
\end{definition}
Note that if $A_n$-structure is given by the operadic action, the above diagram can be constructed such that $B_i$ is the $i$-th `projective space'  $P_n(X)$ over $X$ (like Milnor's construction). The reverse process was done by Stasheff. The projective space is crucial for there are non-trivial $n$-th powers in its cohomology ring. 

Here, the key construction for our proof of Theorem \ref{main1} is the so-called modified projective space due to Hemmi \cite{Hemmi2} which is an analogy of Stasheff's $n$-projective space \cite{Stasheff}. Since we will not use the explicit construction of this concept, we only recall some properties stated in the following lemma.
\begin{lemma}[part of Theorem $1.1$ in \cite{Hemmi2}]\label{lemma1}
Let $n\geq 3$ and $X$ be a finite $A_n$-space with cohomology ring 
\[H^\ast(X,\mathbb{Z}/p\mathbb{Z})\cong \Lambda(x_{2m_1-1},\ldots, x_{2m_r-1}), \ \ {\rm deg}(x_{2m_i-1})=2m_i-1,\]
then there exists a \textit{modified projective space} $R_n(X)$ with a map $\varepsilon: \Sigma X \rightarrow R_n(X)$ such that 
\begin{eqnarray*}
\bar{\mathcal{H}}\cong \bar{A}\oplus \bar{B}=\mathbb{Z}/p\mathbb{Z}~[y_{2m_1},\ldots, y_{2m_r}] /({\rm height}~ n+1)\oplus \bar{B}
\end{eqnarray*}
as rings for some subalgebra $\bar{\mathcal{H}}$ of $H^\ast(R_n(X),\mathbb{Z}/p\mathbb{Z})$ 
and $\varepsilon^\ast(y_{2m_i})=\sigma^\ast(x_{2m_i-1})$, where the idea under quotient in the first factor is generated by monomials of length greater or equal to $n+1$. Further $\bar{\mathcal{H}}$ and $\bar{B}$ are closed under the action of the ${\rm mod}~p$ Steenrod algebra $\mathscr{A}_p$.
\end{lemma}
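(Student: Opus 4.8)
The plan is to realize $R_n(X)$ as a modification of Stasheff's projective space $P_n(X)=B_n$ from Definition~\ref{defanspace} and to read the asserted splitting off its skeletal filtration. I would first recall that the $A_n$-structure assembles the associahedra acting on the iterated smash powers of $X$ into a filtered space $\ast = P_1(X)\subset \Sigma X = P_2(X)\subset\cdots$, whose successive quotients are suspensions of smash powers of $X$, together with a quasi-fibration $X\to E_n\to P_n(X)$ with $E_n$ contractible through the relevant range and an inclusion $\varepsilon\colon \Sigma X = P_2(X)\hookrightarrow R_n(X)$. Transgression in this quasi-fibration sends each exterior generator $x_{2m_i-1}$ to a degree-$2m_i$ class $y_{2m_i}$, and dually the cohomology suspension gives the normalization $\varepsilon^\ast(y_{2m_i}) = \sigma^\ast(x_{2m_i-1})$, which pins down $y_{2m_i}$ modulo decomposables.

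Next I would establish the truncated polynomial summand $\bar A$. The essential input is that the higher associating homotopies supplied by the $A_n$-operad make the iterated products $y_{2m_{i_1}}\cdots y_{2m_{i_k}}$ well defined and nonzero in cohomology for every length $k\le n$, while the filtration offers no room to support monomials of length $\ge n+1$; this is exactly what yields the isomorphism $\bar A\cong \mathbb{Z}/p\mathbb{Z}[y_{2m_1},\dots,y_{2m_r}]/(\mathrm{height}~n+1)$, the ideal being generated by length-$\ge n+1$ monomials. Since the Steenrod operations commute with transgression and with $\sigma^\ast$, they preserve the subalgebra generated by the transgression images, so $\bar A$ is automatically closed under $\mathscr{A}_p$.

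The crux --- and the reason one must pass to the \emph{modified} space rather than use $P_n(X)$ directly --- is to arrange the complement of $\bar A$ to be a ring-theoretic ideal and a direct summand, not merely a vector-space complement. In the raw projective space the extra cohomology forced by the fine cell structure of a general finite $A_n$-space need not be an ideal nor split off as rings. Following Hemmi, I would replace $P_n(X)$ by a space $R_n(X)$ built so that multiplication by each $y_{2m_i}$ respects a chosen splitting, modifying the cell structure of $P_n(X)$ using the finiteness of $X$ and the fact that its $\mathrm{mod}~p$ homology is that of a product of odd spheres. Taking $\bar B$ to be the $\mathscr{A}_p$-submodule of $\bar{\mathcal H}$ spanned by the classes outside $\bar A$, a degree count then gives $\bar A\cdot\bar B\subseteq\bar B$ and $\bar B\cdot\bar B\subseteq\bar B$, exhibiting $\bar{\mathcal H}=\bar A\oplus\bar B$ as a decomposition of rings with $\bar B$ an ideal; $\bar B$ is $\mathscr{A}_p$-closed by construction.

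I expect the main obstacle to be precisely this last geometric step: producing $R_n(X)$ so that the splitting holds \emph{as rings} with $\bar B$ an ideal, uniformly over the undetermined fine homotopy type of $X$, and in particular so that the length-$n$ products --- the $p$-th powers $y_{2m_i}^p$ when $n=p$ --- genuinely survive. The transgression, the normalization of $\varepsilon^\ast$, and the truncated polynomial part are forced by the $A_n$-operad and by naturality, and the Steenrod-closure of $\bar{\mathcal H}$ and $\bar B$ then follows formally; but the ideal property of $\bar B$ is the genuinely new input that the modified projective space of \cite{Hemmi2} is designed to supply.
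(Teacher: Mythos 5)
The paper supplies no proof of Lemma \ref{lemma1}: it is imported verbatim as part of Theorem $1.1$ of \cite{Hemmi2}, and the authors say explicitly that they will not use the explicit construction of $R_n(X)$. Measured against Hemmi's actual proof, your proposal has a genuine gap at the decisive step. You write that, ``following Hemmi,'' you would replace $P_n(X)$ by a space $R_n(X)$ ``built so that multiplication by each $y_{2m_i}$ respects a chosen splitting'' --- that is, you posit a space having exactly the property to be established. The ideal property of $\bar{B}$ cannot come from ``a degree count'': classes of $\bar{A}$ and $\bar{B}$ coexist in the same degrees, and nothing about degrees forces $\bar{A}^{+}\cdot\bar{B}$ or $\bar{B}\cdot\bar{B}$ back into $\bar{B}$. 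Constructing $R_n(X)$ and verifying the ring splitting is the content of the bulk of \cite{Hemmi2}; you correctly identify this as the crux, but identifying the obstacle is not the same as overcoming it, so as a proof the proposal is circular precisely where it matters.

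Two subsidiary steps are also unjustified or false as stated. First, for a general finite $A_n$-space the transgressed classes in $H^\ast(P_n(X))$ need \emph{not} generate a truncated polynomial algebra of height $n+1$ splitting off as claimed: that conclusion requires the generators to be $A_n$-primitive, which is exactly the extra hypothesis in Lemma \ref{lemma1a} and exactly what Hemmi's modified space is designed to dispense with; so the summand $\bar{A}$ cannot simply be read off the skeletal filtration of $P_n(X)$, and your assertion that all products of length $\leq n$ of the $y_{2m_i}$ are well defined and nonzero there is where a direct argument would fail. Second, your claim that $\bar{A}$ is ``automatically closed under $\mathscr{A}_p$'' because Steenrod operations commute with transgression is wrong in general: compatibility with $\sigma^\ast$ determines $\mathscr{P}^k(y_{2m_i})$ only modulo $\ker \varepsilon^\ast$, so the operations may well have components in $\bar{B}$. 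Consistently, the lemma asserts $\mathscr{A}_p$-closure only for $\bar{\mathcal{H}}$ and $\bar{B}$, never for $\bar{A}$, and the Adams--Atiyah argument of Section $2$ is set up to work modulo $(p,\bar{B})$ for precisely this reason.
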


Now we are ready to prove Theorem \ref{main1}.

\noindent{\bf Proof of Theorem \ref{main1}}\quad We prove the theorem by contradiction, and assume $m\nmid p-1$.
 By Lemma \ref{lemma1}, $H^\ast(R_p(X))$ contains a truncated polynomial algebra 
 \begin{equation*}
 \mathbb{Z}/p\mathbb{Z}[y_{2m_1},\ldots,y_{2m_r}]/({\rm height}~p+1)\hookrightarrow H^\ast(R_p(X)). 
 \end{equation*}
 Let us denote $Y(X)=R_p^{2pm_1+1}(X)$ to be the $(2pm_1+1)$-skeleton of $R_p(X)$, we then have a ring decomposition $i^\ast(\bar{\mathcal{H}})\cong i^\ast(\bar{A})\oplus i^\ast(\bar{B})$ where $i: Y(X)\hookrightarrow R_p(X)$ is the canonical inclusion. Then $y_{2m_1}^p\not\equiv 0 ~{\rm mod}~(i^\ast(\bar{B}))$. We then set $m_i=ms_i$, and apply Lemma \ref{mainlemma} for $t=s_1$ and $m=m$ since $m\nmid p-1$ by assumption. Then we get $e(\prod)<ms_1=m_1$, which implies the condition (\ref{condition}) holds for $Y(X)$ since $m_1$ is the lowest degree. Further $i^\ast(\bar{\mathcal{H}})$ and $i^\ast(\bar{B})$ are closed under the action of $\mathscr{A}_p$, hence by the argument in Section $2$, $\bar{x}^p\equiv 0 ~{\rm mod}~(i^\ast(\bar{B}))$ for any $\bar{x}\in i^\ast(\bar{\mathcal{H}})$ which contradicts the fact $y_{2m_1}^p\not\equiv 0 ~{\rm mod}~(i^\ast(\bar{B}))$ and the proof of Theorem \ref{main1} is completed.

{\protect\vspace{-2pt}\rightline{$\square$}}

\subsection{The Finiteness Theorem for finite $A_p$-spaces}
As another application, we prove the following theorem of Hubbuck and Mimura:
\begin{theorem}[\cite{Hubbuck2}]\label{thmHubbuck}
Let $X$ be a connected finite ${\rm mod}~p$ $A_p$ space of rank $r$. Then there are only finitely many possible homotopy types for the space $X$.
\end{theorem}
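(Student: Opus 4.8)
The plan is to reduce the statement to a bound on the degrees $m_1\leq\cdots\leq m_r$: once these are bounded by a constant depending only on $r$ and $p$, there are finitely many possible types $(m_1,\ldots,m_r)$, hence finitely many mod $p$ cohomology algebras together with their (finite-dimensional, hence finitely many) $\mathscr{A}_p$-module structures, and the passage from this finite list to finitely many homotopy types is the standard obstruction-theoretic argument, using that the relevant $p$-local homotopy groups are finite in the range dictated by the now-bounded dimension. So the whole content is to bound $m_r$, and I would do this by running the method of Section $2$ on the \emph{top} generator.

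By Lemma \ref{lemma1} the space $R_p(X)$ carries the truncated polynomial algebra $\bar A=\mathbb{Z}/p\mathbb{Z}[y_{2m_1},\ldots,y_{2m_r}]/(\mathrm{height}\ p+1)$ inside a Steenrod-closed splitting $\bar{\mathcal H}\cong\bar A\oplus\bar B$. Every monomial surviving the truncation has length at most $p$ and hence cohomological degree at most $2pm_r$, so the entire algebra $\bar A$ already lives in the skeleton $R_p^{2pm_r+1}(X)$; in particular $y_{2m_r}^{\,p}\neq 0$ there modulo $\bar B$. Feeding the element $x$ with $\bar x=y_{2m_r}$ into the argument of Section $2$, the nonvanishing of $y_{2m_r}^{\,p}$ is incompatible with the conclusion $\bar x^{\,p}\equiv 0\ (\mathrm{mod}\ \bar B)$; therefore the crucial assumption (\ref{condition}) must \emph{fail} for this index, i.e.
\begin{equation*}
p^{m_r}\mid d_r,\qquad\text{equivalently}\qquad e(d_r)\geq m_r,
\end{equation*}
where now $d_r$ is the g.c.d.\ of the products $\prod_{e}(k^{m_r}-k^{e})$ taken over all the distinct half-degrees $e\neq m_r$ occurring in $\bar A$ (eigenvalue $k^{e}$ for $\psi^k$).

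The opposite, upper, bound on $e(d_r)$ is the number-theoretic heart. The distinct degrees $e$ appearing in $\bar A$ are the degrees of monomials of length at most $p$ in $r$ variables, so their number is at most $\binom{r+p}{p}$, a constant depending only on $r$ and $p$ and independent of the actual degrees. Evaluating the g.c.d.\ at a primitive root $k$ modulo $p^2$ and using $k^{m_r}-k^{e}=\pm\,k^{\min}(k^{|m_r-e|}-1)$ together with equation (\ref{pfactor}), each factor contributes exactly $\nu(|m_r-e|)$ to the $p$-adic valuation, and $\nu(N)\leq e(N)+1\leq \log_p(p\,m_r)+1$. Summing over the at most $\binom{r+p}{p}$ degrees gives
\begin{equation*}
e(d_r)\leq\binom{r+p}{p}\bigl(\log_p m_r+2\bigr).
\end{equation*}
Combining with $e(d_r)\geq m_r$ yields $m_r\leq\binom{r+p}{p}(\log_p m_r+2)$, an inequality whose left side is linear and whose right side is logarithmic in $m_r$; it therefore forces $m_r\leq M(r,p)$ for an explicit constant, and since $m_1\leq\cdots\leq m_r$ all degrees are bounded.

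The step I expect to be most delicate is the bookkeeping underlying the displayed lower bound: the splitting recorded in Section $2$ is written with one eigenspace per generator, whereas here $\bar A$ is a genuine truncated polynomial algebra, so one must first note that the $\psi^k$-eigenspaces of $\bar A\otimes\mathbb{Q}$ are indexed by \emph{all} the distinct monomial half-degrees $e$, form the projection $\pi_r$ and the number $d_r$ using this full set of degrees (exactly as in the derivation of Lemma \ref{mainlemma}, where the analogous products run over all multiples of $m$), and check that $\bar B$ remains a Steenrod-closed complementary ideal after restriction to the skeleton. Once this is in place the estimate above is routine, and the only remaining external input is the classical finiteness of homotopy types realizing a fixed finite-dimensional cohomology, for which the bounded dimension and the finiteness of the relevant $p$-local homotopy groups suffice.
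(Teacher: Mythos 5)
Your overall strategy is the paper's own: restrict the Hemmi splitting $\bar{\mathcal H}\cong\bar A\oplus\bar B$ to a skeleton of $R_p(X)$, play the nontriviality of $y_{2m_r}^{\,p}$ against the Section $2$ machinery to force the failure of condition (\ref{condition}), and beat the resulting lower bound on a $p$-adic valuation with an upper bound of the shape $(\text{constant depending only on }p,r)\cdot\log_p m_r$. Your upper bound on $e(d_r)$ (at most $\binom{r+p}{p}$ factors, each contributing $\nu(|m_r-e|)\le\log_p m_r+2$) is the same estimate the paper writes as $N(p,r)\lfloor\log_p(2(p-1)m_r)\rfloor+N(p,r)$, and the passage from a bounded, hence finite, list of types to finitely many homotopy types is the external input the paper takes from Body--Douglas. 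So the route is the same; but there is one genuine gap.

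The gap is the sentence ``therefore the crucial assumption (\ref{condition}) must fail for this index, i.e.\ $p^{m_r}\mid d_r$.'' The Section $2$ argument needs (\ref{condition}) for \emph{every} index $i$ with $\bar{\pi}_i(x-v)\neq 0$ in order to conclude $\psi^p(x)\in p\tilde{\mathcal H}+B$; its contrapositive therefore only yields $p^{t_i}\mid d_i$ for \emph{some} degree $t_i$ occurring in $\bar A$, not for $t_i=m_r$. On your unpinched skeleton the degrees $t_i$ go all the way down to $m_1$, and if the failure occurs at a small degree the inequality you extract is $m_1\le\binom{r+p}{p}(\log_p m_r+2)$, which does not bound $m_r$ at all. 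This is precisely why the paper forms $Y(X)=R_p^{2pm_r+1}(X)/R_p^{2m_r-1}(X)$, pinching out the $(2m_r-1)$-skeleton: in the quotient every surviving degree satisfies $t_i\ge m_r$, so failure at \emph{any} index gives $e(d_i)\ge t_i\ge m_r$, and the logarithmic upper bound then forces $m_r\le M(p,r)$. Your version can be repaired either by the same pinching, or by choosing the $K$-theory lift $x$ of $y_{2m_r}$ in filtration $2m_r$ and observing that its Chern character, hence each $\bar{\pi}_i(x-v)$, vanishes in degrees below $2m_r$, so that only indices with $t_i\ge m_r$ enter the sum for $\psi^p(x)$. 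One of these devices must be supplied; as written, the deduction $e(d_r)\ge m_r$ is a non sequitur, and the entire bound on $m_r$ rests on it.
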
 
\begin{proof}
Suppose $X$ has the type $(m_1, m_2,\ldots, m_r)$ with $m_1\leq m_2\leq\ldots \leq m_r$ and form the space $Y(X)=\frac{R_p^{2pm_r+1}(X)}{R_p^{2m_r-1}(X)}$ which is the $2pm_r+1$-skeleton of $R_p(X)$ with the $2m_r-1$-skeleton pinched to a point. As in the proof of Theorem \ref{main1}, we can get a ring decomposition $p^{\ast-1}i^\ast(\bar{\mathcal{H}})\cong p^{\ast-1}i^\ast(\bar{A})\oplus p^{\ast-1}i^\ast(\bar{B})$ using the canonical inclusion and projection, such that $p^{\ast-1}i^\ast(\bar{\mathcal{H}})$ and $p^{\ast-1}i^\ast(\bar{B})$ are closed under the action of $\mathscr{A}_p$, and $y_{2m_r}^p$ is nontrivial module $p^{\ast-1}i^\ast(\bar{B})$.
We may also fix a number $N(p ,r)$ only depending on $p$ and $r$ such that $N(p, r)\geq {\rm dim}~p^{\ast-1}i^\ast(\bar{\mathcal{H}})$, and notice that the largest difference of the degrees of any two elements in $p^{\ast-1}i^\ast(\bar{\mathcal{H}})$ is bounded by $2(p-1)m_r$. Suppose the even part of $p^{\ast-1}i^\ast(\bar{\mathcal{H}})$ concentrates in dimension $2t_1, 2t_2, \ldots$, then for sufficient large $m_r$ we have 
\begin{eqnarray*}
e\big(\prod_{j\neq i}(k^{t_i}-k^{t_j})\big)
&\leq& \sum\limits_{j\neq i}\big(e(t_i-t_j)+1\big)\\
&\leq&N(p,r)\lfloor{\rm log}_p(2(p-1)m_r)\rfloor+N(p,r)\\
&<&m_r,
\end{eqnarray*}
for any $i$, i.e., the condition (\ref{condition}) holds which contradicts the existence of the nontrivial $p$-th power in $p^{\ast-1}i^\ast(\bar{\mathcal{H}})$. Accordingly the largest dimension of the generators is bounded and there are only finite possible types for $X$. Also by Corollary $4.2$ in \cite{Body}, there are only finite homotopy types for each certain type. Then in all there are finite homotopy types for fixed rank and the theorem has been proved. 
\end{proof}

\section{rank $3$ ${\rm mod}~3$ homotopy associative $H$-spaces}
\noindent For rank $3$ ${\rm mod}~3$ homotopy associative $H$-spaces, we will consider Stasheff's $3$-projective space instead of Hemmi's modified projective space used in the proof of Theorem \ref{main1}. The key lemma as an analogy to Lemma \ref{lemma1} for projective spaces is the following well-known result.
\begin{lemma}[e.g. \cite{Iwase}]{\label{lemma1a}}
Let $n\geq 3$ and $X$ be a finite $A_n$-space with cohomology ring 
\[H^\ast(X,\mathbb{Z}/p\mathbb{Z})\cong \Lambda(x_{2m_1-1},\ldots, x_{2m_r-1}), \ \ {\rm deg}(x_{2m_i-1})=2m_i-1,\]
 such that each $x_{2m_i-1}$ is $A_n$-primitive, i.e., $x_{2m_i-1}$ lies in the image of a series of natural morphisms: 
 \[H^\ast (P_n(X))\rightarrow H^\ast(P_{n-1}(X))\rightarrow \cdots \rightarrow H^\ast(P_1(X)=\Sigma X)\stackrel{\cong}{\leftarrow} H^{\ast-1}(X),\]
 then we have ring isomorphism
\begin{eqnarray*}
H^\ast(P_n(X), \mathbb{Z}/p\mathbb{Z})\cong A\oplus B=\mathbb{Z}/p\mathbb{Z}~[y_{2m_1},\ldots, y_{2m_r}] /({\rm height}~ n+1)\oplus B
\end{eqnarray*}
as $\mathscr{A}_p$-modules and $A^{+}\cdot B=0$, where ${\rm deg}(y_{2m_i})=2m_i$.
\end{lemma}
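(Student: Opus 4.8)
The plan is to read the cohomology ring of $P_n(X)$ off the associated graded of Stasheff's skeletal filtration of the projective space. Recall that $P_n(X)$ carries a filtration $P_1(X)\subset\cdots\subset P_n(X)$ whose successive quotients are
\[
P_j(X)/P_{j-1}(X)\simeq\Sigma^j X^{\wedge j},\qquad 1\le j\le n,
\]
so that the induced filtration on $H^\ast(P_n(X);\mathbb{Z}/p\mathbb{Z})$ has an associated graded (the $E_1$-term of the resulting bar-type spectral sequence) equal to $\bigoplus_{j=1}^n\widetilde H^\ast(\Sigma^j X^{\wedge j})\cong\bigoplus_{j=1}^n\Sigma^j\big(\widetilde H^\ast(X)\big)^{\otimes j}$. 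Since $\widetilde H^\ast(X)$ is the augmentation ideal of $\Lambda(x_{2m_1-1},\dots,x_{2m_r-1})$, it carries a preferred basis of square-free monomials and splits, as a graded $\mathbb{Z}/p\mathbb{Z}$-module, into the span of the generators $x_{2m_i-1}$ (the length-one monomials) and the span of the decomposables. Under the suspension $\Sigma^j$ a tensor $x_{2m_{i_1}-1}\otimes\cdots\otimes x_{2m_{i_j}-1}$ of length-one factors lands in degree $\sum_l 2m_{i_l}$ and filtration $j$, exactly the bidegree expected of the monomial $y_{2m_{i_1}}\cdots y_{2m_{i_j}}$.

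First I would produce the classes $y_{2m_i}\in H^{2m_i}(P_n(X))$. The $A_n$-primitivity hypothesis says precisely that each generator is $x_{2m_i-1}=\sigma^\ast(y_{2m_i})$ for a class lifting compatibly through the tower $H^\ast(P_n)\to\cdots\to H^\ast(P_1)=H^\ast(\Sigma X)$, so $y_{2m_i}$ is detected in filtration one by $\Sigma\,x_{2m_i-1}$ and is a permanent cycle. The decisive computation is that of the bar differential on the $E_1$-term: for length-one tensors one has $d[x_{2m_i-1}\mid x_{2m_i-1}]=[x_{2m_i-1}^2]=0$ because $x_{2m_i-1}^2=0$ in the exterior algebra, so $y_{2m_i}^2\neq0$, and more generally the symmetric combinations of length-one tensors survive while the antisymmetric ones transgress onto the suspended decomposables. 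This is the mechanism by which the bar construction converts the exterior generators of $X$ into \emph{polynomial} generators of $P_n(X)$. Counting bidegrees then shows that the monomials in the $y_{2m_i}$ of length $\le n$ are linearly independent, while every monomial of length $\ge n+1$ vanishes because the filtration terminates at $j=n$; hence the subalgebra $A$ generated by the $y_{2m_i}$ is exactly $\mathbb{Z}/p\mathbb{Z}[y_{2m_1},\dots,y_{2m_r}]/(\mathrm{height}\ n+1)$.

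Next I would set $B$ to be the complementary summand detected by those classes whose leading term in the associated graded involves a tensor with at least one decomposable factor, so that $H^\ast(P_n(X))=A\oplus B$ additively. The substance of the statement is the multiplicative relation $A^+\cdot B=0$. Degree counting alone does not give this, so I would argue through the ring structure of the filtration, i.e.\ the diagonal of the projective space, which on the associated graded is the shuffle product dual to the deconcatenation coproduct of the bar construction: multiplying length-one classes reproduces $A$ (commutatively, since the $y_{2m_i}$ are even), while multiplying a length-one class into a tensor carrying a decomposable factor either preserves that factor, keeping it in $B$, or, and this is the point, is forced to vanish in $H^\ast(P_n(X))$ by the bar differentials that cancel the suspended decomposables. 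Establishing that these products die, rather than merely landing in $B$, is the step I expect to be the main obstacle, because it uses the full $A_n$-structure --- the higher Stasheff cells that govern the cup products --- and not just the additive filtration.

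Finally, the $\mathscr{A}_p$-module statement follows from naturality. The classes $y_{2m_i}$ are built from the tower maps and the suspension isomorphism, both of which commute with the Steenrod operations; hence $\mathscr{A}_p$ preserves the skeletal filtration and acts on the associated graded through its (Cartan-formula) action on the tensor powers $\big(\widetilde H^\ast(X)\big)^{\otimes j}$. Since this action carries length-one tensors to length-one tensors and preserves the decomposable filtration, it sends $A$ into $A$ and $B$ into $B$. Combining the additive decomposition, the identification of $A$ with the truncated polynomial algebra, the relation $A^+\cdot B=0$, and the $\mathscr{A}_p$-invariance of both summands yields the asserted isomorphism.
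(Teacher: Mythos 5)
The paper does not prove this lemma at all: it is quoted as a known result with the citation ``e.g.\ \cite{Iwase}'', so there is no in-house argument to compare yours against. Your route --- the skeletal filtration $P_1(X)\subset\cdots\subset P_n(X)$ with quotients $\Sigma^jX^{\wedge j}$, the associated bar-type spectral sequence, and the identification of $A$ with the part detected by tensors of generators --- is indeed the standard way this result is obtained, so the overall strategy is sound. But as a proof the proposal has genuine gaps at exactly the two points that carry the content of the statement.

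First, you concede that you do not establish $A^{+}\cdot B=0$; saying that the offending products ``are forced to vanish by the bar differentials that cancel the suspended decomposables'' is a hope, not an argument, and since you yourself identify this as the substance of the lemma, the proof is incomplete there. Second, your argument for the $\mathscr{A}_p$-module splitting rests on the assertion that the Steenrod action ``carries length-one tensors to length-one tensors.'' That is false in general: for an exterior generator $x_{2m_i-1}$ of $H^\ast(X)$, the class $\mathscr{P}^k x_{2m_i-1}$ is an odd-degree element of $\Lambda(x_{2m_1-1},\dots,x_{2m_r-1})$ and may well have decomposable components (a sum of triple products, say). Consequently the span $P$ of the generators is not an $\mathscr{A}_p$-submodule of $\widetilde H^\ast(X)$ --- only the decomposables $D$ are --- so on the associated graded the Cartan formula gives you an $\mathscr{A}_p$-stable \emph{filtration} ($B$ is stable, and $A$ maps into $A\oplus B$), not a stable \emph{splitting}. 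Producing an $\mathscr{A}_p$-closed complement $A$ is where the $A_n$-primitivity hypothesis and the fact that $\sigma^\ast$ kills decomposables must really be used (so that $\mathscr{P}^k y_{2m_i}-\sum_j c_j y_{2m_j}$ has filtration $\geq 2$, after which one still has to push it into $A$), and this careful, non-canonical choice is precisely what Iwase's construction supplies. A third, smaller gap: the claim that the symmetric length-one tensors survive to $E_\infty$ (so that $y_{2m_i}^n\neq 0$) is asserted rather than proved, and it uses primitivity with respect to the multiplication of $X$, not merely liftability of the generators through the tower.
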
   
Notice that the corresponding result in the context of $K$-theory can be easily deduced, and for rank $3$ ${\rm mod}~3$ homotopy associative $H$-spaces, the primitive assumption is automatically satisfied. To prove Theorem \ref{main2}, we will also use the following theorem of Wilkerson.
\begin{theorem}[Theorem $6.1$ and $6.2$ in \cite{Wilkerson3}]\label{mainwilk}
Let $X$ be a finite ${\rm mod}~p$ $A_p$-space with cohomology ring $H^\ast(X,\mathbb{Z}/p\mathbb{Z})\cong \Lambda(x_{2m_1-1},\ldots, x_{2m_r-1})$, such that $m_1\leq m_2\leq \ldots \leq m_r$ and $m_r>p$, then 

(1)~there is a $x_{2m_k-1}$ with $m_r-m_k=s(p-1)$ for some $1\leq s \leq e(m_r)+1$.

(2)~if $p\nmid m_i$ for some $i$, there is a $x_{2m_j-1}$ such that $m_j=k_jm_i-p+1$ for some $1\leq k_j \leq p$.
\end{theorem}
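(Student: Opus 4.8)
The plan is to derive both parts from the action of the mod $p$ Steenrod algebra on the projective space, using Lemma \ref{lemma1} (or Lemma \ref{lemma1a}) as the only geometric input. First I would pass from $X$ to $R_p(X)$ and work inside the truncated polynomial algebra $\bar{A}\cong \mathbb{Z}/p\mathbb{Z}[y_{2m_1},\ldots,y_{2m_r}]/({\rm height}~p+1)$, regarded as the $\mathscr{A}_p$-module quotient $\bar{\mathcal{H}}/\bar{B}$ of Lemma \ref{lemma1}; this quotient is closed under $\mathscr{A}_p$, and every top power $y_{2m_i}^p$ is a nonzero class of degree $2pm_i$ there (since $y_{2m_i}^p\notin\bar{B}$). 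The single structural fact I would use repeatedly is that on a class of degree $2m_i$ the top Steenrod power satisfies $\mathscr{P}^{m_i}y_{2m_i}=y_{2m_i}^p\neq 0$.

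For part (2) I would factor this top operation. The Adem relation in the case $a=1$ gives $\mathscr{P}^1\mathscr{P}^{b}=(b+1)\mathscr{P}^{b+1}$, hence $\mathscr{P}^1\mathscr{P}^{m_i-1}=m_i\mathscr{P}^{m_i}$. Since $p\nmid m_i$, the scalar $m_i$ is invertible modulo $p$, so $y_{2m_i}^p=m_i^{-1}\mathscr{P}^1(w)$ with $w:=\mathscr{P}^{m_i-1}y_{2m_i}\in\bar{A}$ a class of degree $2(pm_i-p+1)$. Expanding $w$ into monomials, every monomial has $y_{2m_i}$-exponent at most $p-1$; applying the Cartan formula to the identity $\mathscr{P}^1(w)=m_i\,y_{2m_i}^p$ and comparing the coefficient of the pure power $y_{2m_i}^p$, a monomial with two or more distinct generators other than $y_{2m_i}$ cannot contribute under a single $\mathscr{P}^1$, so the only monomials that can contribute are of the shape $y_{2m_i}^{p-k}y_{2m_j}$ with $1\le k\le p$ and with $y_{2m_i}^k$ occurring in $\mathscr{P}^1 y_{2m_j}$. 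As the resulting coefficient is nonzero, at least one such generator $y_{2m_j}$ must occur in $w$, and a degree count gives $m_j+(p-1)=k m_i$, i.e.\ $m_j=k_j m_i-p+1$ with $1\le k_j\le p$ (the case $j=i$ being allowed).

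For part (1) I would first dispose of the case $p\nmid m_r$ by applying part (2) with $i=r$: the resulting generator satisfies $m_j=k_j m_r-(p-1)$, and since $m_j\le m_r$ together with $m_r>p-1$ forbids $k_j\ge 2$, only $k_j=1$ survives, giving $m_r-m_j=(p-1)$, i.e.\ $s=1=e(m_r)+1$. When $p\mid m_r$ the relation $\mathscr{P}^1\mathscr{P}^{m_r-1}=m_r\mathscr{P}^{m_r}$ becomes vacuous, so I would instead iterate the Adem relations to factor $\mathscr{P}^{m_r}$ through the admissible ladder built from the $\mathscr{P}^{p^{j}}$, peeling off one factor $(p-1)$ at a time whenever the current exponent is prime to $p$; the number of such steps before the top is reached is governed by the $p$-adic valuation $e(m_r)$, which is what should produce the bound $s\le e(m_r)+1$. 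The main obstacle is exactly this last point: making the book-keeping of the iterated Adem/Cartan analysis precise enough to guarantee that a genuine generator (rather than a decomposable) is produced, and to pin down the sharp bound $e(m_r)+1$ in the case $p\mid m_r$. I expect the number-theoretic estimates of Section~2 (in particular the valuation computations behind Lemma \ref{mainlemma}) to be the right tool to corroborate this bound, since there the exponents of $p$ in the relevant products are likewise governed by $e(\cdot)$ and $\nu(\cdot)$.
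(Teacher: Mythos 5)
The statement you were asked to prove is not proved in the paper at all: it is quoted verbatim from Wilkerson \cite{Wilkerson3} (Theorems $6.1$ and $6.2$), whose proofs are $K$-theoretic, running the $\psi$-operation and eigenvalue machinery that Section $2$ of this paper axiomatizes on the projective space of $X$. Measured against that, your part (2) is a correct and genuinely more elementary cohomological argument: Lemma \ref{lemma1} does supply $\bar{A}\cong\bar{\mathcal{H}}/\bar{B}$ as an algebra over $\mathscr{A}_p$ in which $y_{2m_i}^p\neq 0$; the unstable axiom gives $\mathscr{P}^{m_i}y_{2m_i}=y_{2m_i}^p$; the Adem relation $\mathscr{P}^1\mathscr{P}^{b}=(b+1)\mathscr{P}^{b+1}$ is correct mod $p$, so for $p\nmid m_i$ the top power factors through $w=\mathscr{P}^{m_i-1}y_{2m_i}$; and your Cartan bookkeeping is sound, since in each term of $\mathscr{P}^1M$ the untouched factors survive literally and hence must all be powers of $y_{2m_i}$, forcing $M=y_{2m_i}^{p-k}y_{2m_j}$ and the degree count $m_j=km_i-p+1$ with $1\le k\le p$. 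Your deduction of part (1) from part (2) when $p\nmid m_r$ is also fine: $m_j=k_jm_r-(p-1)\le m_r$ and $m_r>p$ force $k_j=1$, so $s=1=e(m_r)+1$.

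The genuine gap is part (1) when $p\mid m_r$, which is the substantive case (it is exactly when the window $1\le s\le e(m_r)+1$ is wider than one step), and your sketch would not close it. Two things break. First, factoring $\mathscr{P}^{m_r}$ through admissibles or through the $\mathscr{P}^{p^j}$ only shows that some intermediate classes in degrees $2m_r+2t(p-1)$ are nonzero; since every Steenrod power shifts degree by a multiple of $2(p-1)$, this congruence information is automatic, and the nonzero intermediate classes may perfectly well be decomposables $y_{2m_a}y_{2m_b}$ with $m_a+m_b\equiv m_r~{\rm mod}~(p-1)$, which produce no generator $x_{2m_k-1}$ with $m_k\equiv m_r~{\rm mod}~(p-1)$; nothing in the Adem bookkeeping rules this out, as you yourself flag. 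Second, even granting that each peeling step detects a generator, the Adem relations offer no mechanism for the sharp bound $s\le e(m_r)+1$: that bound is a $p$-adic valuation, namely $e(m_r)+1=\nu\bigl((p-1)m_r\bigr)=e\bigl(k^{pm_r}-k^{m_r}\bigr)$ for $k$ a primitive root mod $p^2$, and it is precisely what falls out of the eigenvalue computations of the $\psi$-operation method (the quantities $e\bigl(k^{m(i-j)}-1\bigr)=\nu\bigl(m(i-j)\bigr)$ appearing in the proof of Lemma \ref{mainlemma}). The correct completion along the lines available in this paper is therefore not more Adem relations but the Section $2$ machinery applied to the nonvanishing $p$-th power $y_{2m_r}^p$ in a skeleton of $R_p(X)$, exactly as in the proofs of Theorem \ref{main1} and Theorem \ref{thmHubbuck}: if no generator satisfied $m_r-m_k=s(p-1)$ with $s\le e(m_r)+1$, the relevant product of eigenvalue differences would have $e(\cdot)<m_r$, condition (\ref{condition}) would hold, and the argument of Section $2$ would kill the $p$-th power --- which is in substance Wilkerson's actual proof. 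So your proposal is complete for (2) and for (1) with $p\nmid m_r$, but for (1) with $p\mid m_r$ it rests on a heuristic whose two key steps (generator versus decomposable, and the valuation bound) are exactly the content of the theorem.
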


Combining Theorem \ref{main1} and Theorem \ref{mainwilk}, we are left to consider the following $4$ cases for the possible types of the ${\rm mod}~3$ $A_3$-space $X$ in Theorem \ref{main2}: 

\textbf{Case} $\mathbf{1}$ \ \ \ \ $3|m$, $3|n$ and $m-n=2s$ with $1\leq s \leq e(m)+1$,

\textbf{Case} $\mathbf{2}$ \ \ \ \ $3|m$, $3\nmid n$ and $m-n=2s$ with $1\leq s \leq e(m)+1$,

\textbf{Case} $\mathbf{3}$ \ \ \ \ $3\nmid m$ and $m-n=2s$ with $1\leq s\leq e(m)+1$,

\textbf{Case} $\mathbf{4}$  \ \ \ \ $m-r=2t$ with $1\leq t\leq e(m)+1$, and $m-n\neq 2s$ for any $s$ such that $1\leq s\leq e(m)+1$.

For \textbf{Case} $\mathbf{1}$, we need the following lemma:
\begin{lemma}\label{lemmacase1}
Under the condition of Theorem \ref{main2} and \textbf{Case} $\mathbf{1}$, we have 

(1)~ If $r=2$, $m>n>6$ and $e(m)\geq e(n)+2$, then $8e(n)+23\geq n$;

(2)~If $r=2$, $m>n>6$ and $e(m)=e(n)+1$, then $8{\rm max}\big( e(3n-m), e(3n-2m)\big)+15\geq n$;

(3)~If $m\leq 3r$, $e(m)\geq e(n)+2$, then $7e(n)+\lfloor {\rm log}_3(m-r)\rfloor+24\geq m$ or $8\lfloor {\rm log}_3(m-r)\rfloor+24\geq 3r$;

(4)~If $m\leq 3r$, $e(m)=e(n)+1$, then $7{\rm max}\big( e(3n-m), e(3n-2m)\big)+\lfloor {\rm log}_3(m-r)\rfloor+17\geq m$ or $8\lfloor {\rm log}_3(m-r)\rfloor+24\geq 3r$.
\end{lemma}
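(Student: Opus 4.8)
The plan is to run the Adams--Atiyah machinery of Section~$2$ on the $3$-projective space $P_3(X)$, whose cohomology by Lemma~\ref{lemma1a} contains the truncated polynomial algebra $\mathbb{Z}/3\mathbb{Z}[y_{2r},y_{2n},y_{2m}]/(\mathrm{height}~4)$ as an $\mathscr{A}_3$-module summand $A$ with $A^{+}\cdot B=0$. Since $X$ is an $A_3$-space the cubes $y_{2r}^3,y_{2n}^3,y_{2m}^3$ are nonzero modulo $B$, so if the conclusion $\bar x^3\equiv 0~(\mathrm{mod}~B)$ of Section~$2$ held for every generator, all three cubes would vanish, a contradiction; hence for at least one generator the crucial condition~(\ref{condition}) must break at some eigenspace index $i$ occurring in the $\psi$-decomposition of a $K$-theory lift $x$ of that generator, i.e. $e(d_i)\ge t_i$, where $2t_i$ is the degree of the $i$-th eigenspace. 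To keep the family of eigenspaces finite and to pin down which half-degrees $t_j$ occur, I would first pass to a suitable skeleton (or subquotient) of $P_3(X)$ retaining the chosen cube, exactly as in the proofs of Theorem~\ref{main1} and Theorem~\ref{thmHubbuck}; the surviving eigenspace half-degrees are then the monomial degrees $\{ar+bn+cm:1\le a+b+c\le 3\}$ lying in the chosen range.

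The estimate on $e(d_i)$ is the number-theoretic heart. Evaluating $d_i$ at a common primitive root $k$ modulo $p^2$ gives $e(d_i)\le\sum_{j\neq i}\nu(|t_i-t_j|)$, and at $p=3$ the function $\nu$ is fully explicit: $\nu(a)=e(a)+1$ when $a$ is even and $\nu(a)=0$ when $a$ is odd. Thus only the even differences $|t_i-t_j|$ contribute, each by $e(|t_i-t_j|)+1$. Before counting I would record two facts special to \textbf{Case}~$\mathbf 1$. First, $e(m)>e(n)\ge 1$: indeed $3\mid m,n$ forces $e(m),e(n)\ge 1$, while $e(m)=e(n)$ would make $m-n$ a positive even multiple of $3^{e(n)}$, hence at least $2\cdot 3^{e(n)}\ge 6$, exceeding $2s\le 2(e(n)+1)$; so the sub-cases $e(m)\ge e(n)+2$ and $e(m)=e(n)+1$ are exhaustive. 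Second, the resonant valuations behave as follows: $e(m-n)=e(n)$ always, and $e(3n-m)=e(3n-2m)=e(n)+1$ when $e(m)\ge e(n)+2$, whereas when $e(m)=e(n)+1$ one has $e(3n)=e(m)$ and $e(3n-m),e(3n-2m)$ may be arbitrarily large. This is exactly why $e(n)$ alone controls parts~(1),(3) while the quantity $\max(e(3n-m),e(3n-2m))$ must be carried along in parts~(2),(4).

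With these in hand each part becomes a finite count. For a fixed index $i$ I would list the monomial half-degrees $t_j$ in range, discard those with $|t_i-t_j|$ odd, and bound $e(|t_i-t_j|)$ for the rest: a difference between a multiple of $3$ and a non-multiple has valuation $0$ and contributes only its $+1$; the resonant differences contribute $e(n)+O(1)$ (sub-case $e(m)\ge e(n)+2$) or $\max(e(3n-m),e(3n-2m))+O(1)$ (sub-case $e(m)=e(n)+1$); and the single genuinely $\log$-sized contribution, present only when $r$ is unconstrained, is $e(m-r)\le\lfloor\log_3(m-r)\rfloor$. Summing over the at most eight contributing $j$ produces the coefficients $7,8$ and the constants $15,17,23,24$, and the failure inequality $t_i\le e(d_i)\le\sum_{j\neq i}\nu(|t_i-t_j|)$ reads off as the stated bound. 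The split $r=2$ (so $m>3r$, parts~(1),(2)) versus $m\le 3r$ (parts~(3),(4)) decides which generators fall in range: when $r=2$ one has $m-r=m-2$ prime to $3$ since $3\mid m$, whence $e(m-r)=0$ and no $\log$ term survives; and the disjunction in~(3),(4) records whether one analyses the top cube $y_{2m}^3$ (bounding $m$) or the bottom cube $y_{2r}^3$ (bounding $3r$).

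The main obstacle is this bookkeeping itself: correctly determining which eigenspaces survive in the chosen skeleton, tracking the parities of all differences $|t_i-t_j|$ to decide which terms appear, and---most delicately---bounding the $3$-adic valuations of the resonant differences, where the dichotomy $e(m)\ge e(n)+2$ versus $e(m)=e(n)+1$ and the auxiliary quantities $e(3n-m),e(3n-2m)$ are unavoidable. Extracting the \emph{exact} constants forces one to separate a handful of parity and divisibility sub-configurations of $r,n,m$, and this is the entire technical weight of the lemma; by contrast the topological and $K$-theoretic inputs (existence of a skeleton retaining the cube, closure under $\mathscr{A}_3$, and the semisimplicity of $\psi^k$) are supplied verbatim by Lemma~\ref{lemma1a} and Section~$2$ and require no new work.
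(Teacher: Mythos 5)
Your proposal follows the paper's own proof essentially verbatim: the paper likewise forms the $\{\psi^k\}$-module $K=\mathbb{Z}_{(3)}[x_r,x_n,x_m]/(\mathrm{height}~4)$, deletes the pure powers of $x_r$ (resp.\ the low-degree monomials when $m\le 3r$) to get a submodule $K'$ on which condition (\ref{condition}) must fail because the relevant cube survives, and then bounds $e(d_i)\le\sum_{j\ne i}\nu(\lvert t_i-t_j\rvert)$ by exactly the parity-and-valuation count of monomial half-degrees that you describe, with the same dichotomy $e(m)\ge e(n)+2$ versus $e(m)=e(n)+1$ and the same role for $e(3n-m)$, $e(3n-2m)$ and $\lfloor\log_3(m-r)\rfloor$. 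The one place where your sketch (like the paper's own estimate $e(\Phi(0,j,k))\le e(n)+1$) is slightly too coarse is the treatment of differences that are pure multiples of $m$, whose valuation is $e(m)$ rather than $e(n)+O(1)$; this does not change the method but is where the exact constants need care.
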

\begin{proof}
By the condition, we have a $\{\psi^k\}$-module $K=\mathbb{Z}_{(3)}[x_r, x_n, x_m]/({\rm height} ~4)$ where the subscripts refer to the filtration degree. For $(1)$ and $(2)$, $r=2$, and we only need to consider $K^\prime= K-\{x_r^i~|~i=1, 2, 3\}$. We can set 
$$S=\big\{ 2i+jn+km~|~ (i, j, k)\neq (i,0,0), 0\leq|j|,|k|\leq 3, 0\leq |i|\leq 2 \big\},$$
and define $\Phi(i, j, k)=|2i+jn+km|$. Now For $(1)$ we have $e(\Phi(0, j, k))\leq e(n)+1$ and $e(\Phi(i, j, k))=0$ if $|i|=1$, or $2$. And we notice that there are $9$ elements of the form $x_n^\ast x_m^\ast$, $5$ elements of the form $x_r^1x_n^\ast x_m^\ast$ and $2$ elements of the form $x_r^2x_n^\ast x_m^\ast$ in $K^\prime$, then
\begin{eqnarray*}
e\big(\prod_{(\tilde{i}, \tilde{j}, \tilde{k})\neq (0, j ,k)}(2^{jn+km}-2^{2\tilde{i}+\tilde{j}n+\tilde{k}m})\big)
&\leq&\sum\limits e\big(\Phi(-\tilde{i}, j-\tilde{j}, k-\tilde{k})\big)+ 15\\
&\leq& 8(e(n)+1)+15\\
&=& 8e(n)+23.\\
\end{eqnarray*}
Similarly, we have $e\big(\prod_{(\tilde{i}, \tilde{j}, \tilde{k})\neq (1, j ,k)}\big)\leq 4e(n)+19$ and $e\big(\prod_{(\tilde{i}, \tilde{j}, \tilde{k})\neq (2, j ,k)}\big)\leq e(n)+16$. Since condition (\ref{condition}) should fail for $X$, we must have $8e(n)+23\geq n$. 

The remaining three claims can be proved similarly, and notice that for $(3)$ and $(4)$, we work with $K^\prime =K-\{x_r, x_n\}$ if $m\leq 2r$ and with $K^\prime=K-\{x_r,x_n,x_r^2\}$ if $m>2r$.
\end{proof}
Now we are ready to deal with \textbf{Case} $\mathbf{1}$:
\begin{proposition}\label{prop1}
Under the condition of Theorem \ref{main2} and \textbf{Case} $\mathbf{1}$, the only possible types of $X$ are:
$$(2, 3, 9), (2, 12,18), (2, 21, 27), (2, 30, 36), (2, 39, 45),$$
$$(7, 12,18), (10, 12,18), (16, 30, 36), (19, 30, 36).$$
\end{proposition}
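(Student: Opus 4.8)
The plan is to intersect three sources of constraints---the divisibility $\gcd\{m_i\mid m_i\leq 3r\}\mid 2$ coming from Theorem \ref{main1}, the numerical inequalities of Lemma \ref{lemmacase1}, and the arithmetic relations of Theorem \ref{mainwilk}---so as to cut the possible triples $(r,n,m)$ down to a finite list, and then to test the finitely many survivors one at a time. A preliminary observation will organize everything: in \textbf{Case} $\mathbf 1$ the exponents are always forced apart, $e(m)\geq e(n)+1$. Indeed, were $e(m)\leq e(n)$, then with $f=e(m)\geq 1$ both $m$ and $n$ would be divisible by $3^f$, hence $3^f\mid m-n=2s$ and so $3^f\mid s$, which is incompatible with $s\leq e(m)+1=f+1<3^f$. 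Consequently only the two branches $e(m)\geq e(n)+2$ and $e(m)=e(n)+1$ of Lemma \ref{lemmacase1} can occur, and never $e(m)=e(n)$.

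Next I would split according to the position of $n$ and $m$ relative to $3r$. If $r=2$, then $n>6=3r$ lands in the hypothesis of parts (1)--(2), while the low cases $n\leq 6$ (so $n\in\{3,6\}$ since $3\mid n$) are checked by hand and produce only $(2,3,9)$. If $r>2$, then Theorem \ref{main1} already excludes $n>3r$ (that would force $r\mid 2$), so $n\leq 3r$; the subcase $m\leq 3r$ is exactly the hypothesis of parts (3)--(4). The remaining intermediate range $n\leq 3r<m$ is not covered by the Lemma, and I would dispose of it separately: there $\gcd(r,n)\mid 2$ together with $3\mid n$ forces $3\nmid r$, so Theorem \ref{mainwilk}(2) applied to $x_{2r-1}$ yields a generator index in $\{r-2,2r-2,3r-2\}$; as $m>3r$ rules out the value $m$ and as $3\nmid 3r-2$, the only surviving possibility is $n=2r-2$ with $r\equiv 1\pmod 3$. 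Feeding $m=2r-2+2s>3r$ back in gives $s>(r+2)/2$, whence $e(m)\geq s-1>r/2$; combined with $3^{e(m)}\leq m\leq 2r+2e(m)<6e(m)$ this forces $e(m)\leq 2$, contradicting $e(m)>r/2\geq 2$ for $r\geq 4$. Hence the intermediate range is empty.

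Inside parts (1)--(4) the inequalities become effective bounds because $e(k)\leq\log_3 k$: for instance $8e(n)+23\geq n$ yields $n-8\log_3 n\leq 23$, hence $n$ is bounded, and the $m\leq 3r$ inequalities bound $m$ and $3r$ in the same way. In the branches $e(m)=e(n)+1$ I would first rewrite the auxiliary quantities via $m=n+2s$ as $3n-m=2(n-s)$ and $3n-2m=n-4s$ with $s\leq e(m)+1$, so that their exponents are again controlled by $\log_3 n$. Crucially I would single out the degenerate coincidences $3n-m=0$ and $3n-2m=0$ (i.e. $m=3n$ or $2m=3n$), where two monomials share a filtration degree, $\psi^k$ cannot separate them, and the method is powerless to exclude the type; this is precisely what lets $(2,3,9)$ and the triples with $(n,m)=(12,18)$ survive. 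With the degrees bounded I would enumerate all triples with $3\mid n$, $3\mid m$, $m-n=2s$, $1\leq s\leq e(m)+1$, and $r<n<m$, retaining a triple only when the relevant inequality of Lemma \ref{lemmacase1} actually holds---otherwise condition (\ref{condition}) holds and the type is excluded---and when Theorem \ref{mainwilk}(2) is satisfied. This last test is the decisive filter on $r$: for fixed $(n,m)$ it selects only $r\in\{2,7,10\}$ for $(n,m)=(12,18)$ and only $r\in\{16,19\}$ for $(n,m)=(30,36)$, discarding the many nearby values that Theorem \ref{main1} alone would allow.

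I expect the two genuinely delicate points to be the emptiness of the intermediate range $n\leq 3r<m$---which needs Theorem \ref{mainwilk}(2) and the exponent estimate $e(m)\geq e(n)+1$ to be used together rather than either alone---and the careful bookkeeping of the finite enumeration in the $e(m)=e(n)+1$ branches, where one must track the $3$-adic exponents of $2(n-s)$ and $n-4s$ and correctly recognize the degenerate collisions. The remainder is routine: once the degree bounds are in force, listing the triples compatible with all three constraints returns exactly the nine types in the statement.
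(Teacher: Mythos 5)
Your proposal is correct and follows essentially the same route as the paper's own proof: it combines Theorem \ref{main1}, Lemma \ref{lemmacase1} and Theorem \ref{mainwilk}, disposes of $e(n)\geq e(m)$ by forcing $s=1$ against $3\mid m-n$, splits into $r=2$, $m\leq 3r$, and the intermediate range $n\leq 3r<m$ (your separate argument there is the paper's case (5)), and finishes by the same bounded enumeration. The only differences are presentational (the exponent observation moved to the front, and the final enumeration sketched rather than carried out case by case as in the paper).
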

\begin{proof}
By Theorem \ref{main1}, we have ${\rm g.c.d.}(r, n, m)\leq 2$, then $3\nmid r$. So by Theorem \ref{mainwilk}, we have $x=\lambda r-2$ with $\lambda\in \{1, 2, 3\}$ and $x\in\{ r, n, m\}$. Then either $r=2$ or $n=2r-2$ or $m=2r-2$. 

We prove the proposition under condition $e(m)>e(n)$ first:

(1)~If $r=2$, $n>6$ and $e(m)\geq e(n)+2$, by Lemma \ref{lemmacase1}, we have $8e(n)+23\geq n$. Then \begin{eqnarray*}
3^{e(m)}\cdot f
&=&m=n+2s\\
&\leq& 8e(n)+23+2(e(n)+1)\\
&=&10e(n)+25\\
&\leq& 10e(m)+5.
\end{eqnarray*} 
Since $e(m)\geq 3$, we have 
$m=27$ and $e(m)=3$. Then $e(n)=e(s)=1$ and $n$ is odd. Now it is not hard to check that $(2, 21, 27)$ is the only possible type satisfying all the conditions.

(2)~If $r=2$, $n>6$ and $e(m)=e(n)+1$, by Lemma \ref{lemmacase1}, $8{\rm max}\big( e(3n-m), e(3n-2m)\big)+15\geq n$. If $8e(3n-m)+15\geq n$, then $8e(n-s)+12\geq 8e(n-s)+15-s\geq n-s$ for $e(n-s)=e(2(n-s)=3n-m)\geq e(m) \geq 2$ and $s\geq 3$. Then it is to show $n-s=9,18$ or $27$. In any case, $s\leq e(m)+1\leq 4$ which implies $s=3$. And then $m-n=6$ and $n=12, 21$ or $30$. But since $e(m)=e(n)+1=2$, only $(2, 12, 18)$ or $(2,30,36)$ is possible for our $X$.

If $8e(3n-2m)+15\geq n$, then $8e(n-4s)+3\geq 8e(n-4s)+15-4s\geq n-4s$ for $n-4s=3n-2m$ and $s\geq 3$. Then we get $n-4s=9, 18$ or $27$. Again since $e(n-4s)\geq e(m)\geq 2$ and $s\leq e(m)+1$, we have $s=3$. Then $m-n=6$ and $n=21, 30$ or $39$ and only $(2,30,36)$ and $(2,39,45)$ survive. 

(3)~If $m\leq 3r$, $e(m)\geq e(n)+2$, by Lemma \ref{lemmacase1} we have $7e(n)+\lfloor {\rm log}_3(m-r)\rfloor+24\geq m$ or $8\lfloor {\rm log}_3(m-r)\rfloor+24\geq 3r$. We also notice that $r\neq 2$ which by our early discussion implies $n=2r-2$ or $n=2m-2$. If the first inequality and $n=2r-2$ hold, then
\begin{eqnarray*}
2r-2=n<m
&\leq& 7e(n)+\lfloor {\rm log}_3(m-r)\rfloor+24\\
&\leq& 7e(r-1)+\lfloor {\rm log}_3(2r)\rfloor+24\\
&\leq&8\lfloor {\rm log}_3r\rfloor+25,
\end{eqnarray*}
which implies $r\leq 21$. Then $m\leq 3r\leq 63$ implies $m=27$ or $54$ for $e(m)\geq 3$. So $e(m)=3$ and $e(n)=1$. Since $3|s$ and $s\leq e(m)+1$, we have $s=3$ and $m-n=6$. Then we see $m=27$ is impossible for $n$ is even, while $m=54$ leads to $r=25$ which contradicts our previous calculation. 
Similar arguments can be applied to other $3$ cases which will show there are no types left.

(4)~If $m\leq 3r$, $e(m)=e(n)+1$, by Lemma \ref{lemmacase1} and similar calculations as in $(3)$, we get $(r, n, m)=(7, 12,18)$, $(10, 12,18)$, $(16, 30, 36)$ or $(19, 30, 36)$.

(5)~By Theorem \ref{main1}, the only remaining case under condition $e(m)>e(n)$ is $n\leq  3r$ but $m>3r$. If $r=2$, then $n=3$ or $6$ which gives $(r, n, m)=(2, 3, 9)$. When $n=2r-2$,  we have $m/3+2<m-n=2s\leq 2e(m)+2$ which is impossible. Further, $m=2r-2$ can not hold by our assumption.

We have proved the proposition when $e(m)>e(n)$. If $e(n)\geq e(m)$, then $e(s)=e(m-n)\geq e(m)\geq s-1\geq 0$ which implies $s=1$ and $m-n=2$. However, since $3|m$ and $3|n$, this is impossible.
\end{proof}

For the remaining cases, we will also use a theorem of Hemmi:
\begin{theorem}[Theorem $1.2$ in \cite{Hemmi}, also see Section $8$ in \cite{Hemmi2}]\label{Hemmi}
Let $X$ be a homotopy $H$-space with $H^\ast(X; \mathbb{Z}/3\mathbb{Z})$ being finite. Then for any $n\in \mathbb{Z}$ with $n\not\equiv 0 ~{\rm mod}~3$ and $n>3$, if 
\begin{equation}\label{Hemmicondition}
QH^{2(3^a\cdot2t)-1}(X,\mathbb{Z}/3\mathbb{Z})=0,~{\rm for}~ t\geq n-1,
\end{equation}
then we have 
\begin{equation}
\mathscr{P}^{3^a}: QH^{2(3^a (n-2))-1}(X, \mathbb{Z}/3\mathbb{Z})\rightarrow QH^{2(3^an)-1}(X, \mathbb{Z}/3\mathbb{Z})
\end{equation}
is an epimorphism, where $QH^{\ast}(X, \mathbb{Z}/3\mathbb{Z})=H^{\ast}(X, \mathbb{Z}/3\mathbb{Z})/DH^{\ast}(X, \mathbb{Z}/3\mathbb{Z})$ and $DH^{\ast}(X, \mathbb{Z}/3\mathbb{Z})$ is the submodule consisting of decomposable elements.
\end{theorem}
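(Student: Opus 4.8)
The plan is to translate the statement, which concerns the $\mathscr{A}_p$-module $QH^\ast(X,\mathbb{Z}/3\mathbb{Z})$, into a computation with the polynomial generators of Hemmi's modified projective space $R_3(X)$ supplied by Lemma \ref{lemma1}. There the subalgebra $\bar{\mathcal{H}}$ contains a truncated polynomial algebra $\mathbb{Z}/3\mathbb{Z}[y_{2m_1},\ldots,y_{2m_r}]/(\mathrm{height}~4)$ that is closed under $\mathscr{A}_p$, and the map $\varepsilon\colon\Sigma X\to R_3(X)$ satisfies $\varepsilon^\ast(y_{2m_i})=\sigma^\ast(x_{2m_i-1})$. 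Since $\varepsilon^\ast$ kills the decomposables of $\bar{\mathcal{H}}$ (cup products vanish on a suspension) and the suspension isomorphism $\sigma^\ast$ commutes with the Steenrod algebra, $\varepsilon^\ast$ identifies the $\mathscr{A}_p$-module of indecomposable generators $\{y_{2m_i}\}$ with $\Sigma QH^\ast(X,\mathbb{Z}/3\mathbb{Z})$. The first step is to record this identification and, using the Cartan formula, to observe that $\mathscr{P}^{3^a}$ descends to indecomposables; the theorem thereby becomes the assertion that $\mathscr{P}^{3^a}$ is onto from the generators in degree $2\cdot 3^a(n-2)$ to those in degree $2\cdot 3^a n$, under the hypothesis that there are no indecomposable generators in degrees $2\cdot 3^a\cdot 2t$ with $t\geq n-1$.

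Granting the reduction, I would argue by contradiction: suppose some generator $z$ in the target degree $2\cdot 3^a n$ is not a $\mathscr{P}^{3^a}$-image. The idea is to exploit that $\mathscr{A}_p$ is generated by the powers $\mathscr{P}^{3^i}$ together with $\beta$, and that the truncated polynomial structure enforces the unstable relations $\mathscr{P}^i y=0$ for $2i>\deg y$ and $\mathscr{P}^i y=y^3$ for $2i=\deg y$. Applying reduced powers to $z$ and rewriting the resulting composites through the Adem relations, one propagates $z$ into the higher degrees $2\cdot 3^a\cdot 2t$; the failure of $z$ to lie in the image of $\mathscr{P}^{3^a}$ should then force a nonzero indecomposable to survive in one of these degrees with $t\geq n-1$, contradicting the vanishing hypothesis. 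The arithmetic assumptions $n\not\equiv 0\pmod 3$ and $n>3$ enter precisely to guarantee, via Lucas' theorem applied to the binomial coefficients appearing in the Adem rewritings, that the operations used in this propagation act nondegenerately modulo $3$, so that no cancellation destroys the surviving class.

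The step I expect to be the main obstacle is exactly this combinatorial control of the Steenrod action. One must enumerate the admissible monomials $\mathscr{P}^I$ relating the relevant degrees, rewrite each through the Adem relations while tracking the mod $3$ binomial coefficients, and separate the decomposable error terms generated by the Cartan formula (which vanish in $QH^\ast$) from the genuine contributions to indecomposables. The delicate points are verifying that the cut-off $t\geq n-1$ in the hypothesis is sharp enough to absorb every competing operation, and handling the small values $n=4,5$ where the admissible range is short and exceptional configurations could otherwise slip through. Once this bookkeeping is organized — and it is where the hypotheses on $n$ are genuinely consumed — the epimorphism follows formally from the reduction to $R_3(X)$ and the Cartan-formula descent to indecomposables.
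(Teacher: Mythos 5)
This statement is not proved in the paper at all: it is Hemmi's Theorem~$1.2$ from \cite{Hemmi}, imported as a black box, so there is no internal argument to compare yours against. Judged on its own terms, your proposal has a genuine gap. The entire content of the theorem lies in the step you defer as ``the main obstacle'' --- the combinatorial control of the Steenrod action --- and that step is not carried out; what remains is a framing (pass to $R_3(X)$, identify indecomposables via $\varepsilon^\ast$ and $\sigma^\ast$, note that $\mathscr{P}^{3^a}$ descends to $QH^\ast$ by the Cartan formula), which is correct but routine and proves nothing about surjectivity.

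More concretely, the proposed mechanism for the contradiction does not work as described. You take a class $z$ in the \emph{target} degree $2\cdot 3^a n-1$ that is allegedly not in the image of $\mathscr{P}^{3^a}$, and then ``apply reduced powers to $z$'' to propagate it upward. But operations applied to $z$ only constrain the image of operations \emph{out of} its degree; they carry no information about whether $z$ is \emph{hit} by $\mathscr{P}^{3^a}$ from degree $2\cdot 3^a(n-2)-1$. No mechanism is given that converts the hypothesis ``$z\notin\operatorname{im}\mathscr{P}^{3^a}$'' into the survival of an indecomposable in one of the degrees $2(3^a\cdot 2t)-1$, and it is exactly this conversion that the theorem asserts. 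A correct argument must either dualize (prove injectivity of the dual operation on primitives in $H_\ast(X;\mathbb{Z}/3\mathbb{Z})$) or exploit in an essential way the nontriviality of cubes $y^3=\mathscr{P}^{3^a n}(y)$ in the height-$4$ truncated algebra inside $H^\ast(R_3(X))$, factoring $\mathscr{P}^{3^a n}$ through Adem relations so that the hypothesis (\ref{Hemmicondition}) kills all competing summands; neither route is set up in your sketch, and the specific form of the excluded degrees $2(3^a\cdot 2t)-1$ is never accounted for. As it stands the proposal is a plausible outline of \emph{where} a proof might live, not a proof.
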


\begin{proposition}\label{prop2}
Under the condition of Theorem \ref{main2} and \textbf{Case} $\mathbf{2}$, the only possible types of $X$ are:
$$(2, 4, 6), (3, 4,6), (3, 5, 9), (6,8,12).$$

\end{proposition}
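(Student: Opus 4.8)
The plan is to combine the $\psi$-detection of Section $2$ with the two structural inputs now at our disposal: Wilkerson's Theorem \ref{mainwilk} and Hemmi's Theorem \ref{Hemmi}. As in Proposition \ref{prop1}, Lemma \ref{lemma1a} gives $H^\ast(P_3(X),\mathbb{Z}/3\mathbb{Z})\cong A\oplus B$ with $A=\mathbb{Z}/3\mathbb{Z}[y_{2r},y_{2n},y_{2m}]/({\rm height}~4)$, and with the primitive root $k=2$ modulo $9$ the method of Section $2$ shows that a type survives only if condition (\ref{condition}) fails at some eigenspace of the filtered module $K=\mathbb{Z}_{(3)}[x_r,x_n,x_m]/({\rm height}~4)$; equivalently $e\big(\prod_{t_j\neq\mu}(2^{\mu}-2^{t_j})\big)\geq\mu$ for some filtration half-degree $\mu$, the product running over the remaining monomials of length $\leq 3$. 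Since the valuation grows only like $\log_3$ of the degree spread while $\mu$ grows linearly, this rules out every type with $m$ beyond an explicit constant, so the search is finite; the point of the two structural theorems is to cut that finite search down to the stated list.

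First I would branch with Theorem \ref{mainwilk}(2). As $3\nmid n$ in Case $\mathbf{2}$, some generator equals $k_jn-2$ with $k_j\in\{1,2,3\}$. The value $k_j=3$ gives $3n-2\equiv1\pmod3$, which cannot equal the $3$-divisible degree $m$ and exceeds $n$, so it is excluded, leaving either (a) $r=n-2$ or (b) $m=2n-2$. In branch (b) the relation $m-n=2s$ forces $n-2=2s$, so $n$ is even and $s=(n-2)/2$; feeding this into the Case $\mathbf{2}$ bound $s\leq e(m)+1=e(2n-2)+1$ and using $3\mid m$ (hence $n\equiv1\pmod3$) collapses, after a short $3$-adic estimate, to $n=4$ and $m=6$, whence $1<r<4$ gives $r\in\{2,3\}$ and the types $(2,4,6)$ and $(3,4,6)$.

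In branch (a) I would first apply Theorem \ref{mainwilk}(2) to $r$ whenever $3\nmid r$: the required generator $k_jr-2$ must then lie in $\{r,n,m\}=\{r,r+2,m\}$, and running through $k_j\in\{1,2,3\}$ eliminates all such types except the smallest, which is what removes $(5,7,9)$, $(8,10,12)$, $(11,13,15)$, and so on. When $3\mid r$ the tool is Theorem \ref{Hemmi}: write $r=3^aw$ with $3\nmid w$, and when $w>3$ take Hemmi's parameter to be $w$, so the target $QH^{2(3^aw)-1}=QH^{2r-1}$ is nonzero while the source $QH^{2(3^a(w-2))-1}$ sits in degree $3^a(w-2)<r$, below the bottom generator; the surjectivity of $\mathscr{P}^{3^a}$ then cannot hold, so Hemmi's hypothesis must fail, forcing a generator in some even half-degree $\geq 3^a\cdot2(w-1)$, and for almost all such $r$ the $s$-bound keeps $m$ too small to provide one, a contradiction. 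Combined with the $\psi$-detection for the larger degrees this leaves exactly $(2,4,6)$, $(3,5,9)$ and $(6,8,12)$, and I would close by checking directly that each of the four surviving types does evade condition (\ref{condition}) and is consistent with Theorems \ref{mainwilk} and \ref{Hemmi}.

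The hard part will be the finitely many residual configurations in branch (a) for which the $3$-free part $w$ of $r$ is small (essentially $r$ a power of $3$, or twice one, or the borderline $r=12$), such as $(9,11,15)$, $(12,14,18)$ and $(18,20,24)$. Here the clean Hemmi forcing does not trigger: either no admissible parameter $w>3$ represents $r$, or the degree $m$ happens to fall exactly on the forced even slot $3^a\cdot2(w-1)$ so that the hypothesis fails harmlessly; and the $\psi$-detection is not sharp at these low degrees because the generators share a high $3$-adic valuation, making every eigenspace violate condition (\ref{condition}). These must be dispatched by a finer simultaneous analysis of the $\mathscr{P}^1$- and $\mathscr{P}^{3}$-actions dictated by Theorem \ref{Hemmi} — the relations $\mathscr{P}^1x_{2r-1}=x_{2n-1}$ and $\mathscr{P}^{3}x_{2r-1}=x_{2m-1}$ — together with the Adem relations, which is precisely the interplay between Steenrod operations and $\psi$-operations that the whole argument is built to exploit.
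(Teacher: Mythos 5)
Your overall skeleton is close to the paper's: both proofs branch via Theorem \ref{mainwilk}(2) applied to $n$ into $r=n-2$ or $m=2n-2$, and your treatment of the branch $m=2n-2$ (giving $(2,4,6)$ and $(3,4,6)$) and of the sub-branch $3\nmid r$ of $r=n-2$ (forcing $r=2$ and then $(2,4,6)$ by the bound $s\leq e(m)+1$) agrees with the paper. Your use of Hemmi's theorem with parameter $w$ where $r=3^aw$, $w>3$, is a legitimate variant the paper does not use, and it does dispose of most $r$ with large $3$-free part.

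However, there is a genuine gap: everything you defer to the ``finer simultaneous analysis'' is precisely where the proposition lives. Your Hemmi argument requires $w>3$, so it says nothing about $r=3^a$ or $r=2\cdot 3^a$; these give infinitely many families surviving the $s$-bound, e.g.\ $(3^a,3^a+2,3^a+6)$ and $(2\cdot 3^a,2\cdot 3^a+2,2\cdot 3^a+6)$ for every $a$ (each has $s=3\leq e(m)+1$... in fact $s=3$ and $e(m)\geq 2$, wait, one checks $s\leq e(m)+1$ holds), as well as $(9,11,15)$, $(18,20,24)$, $(27,29,33)$, $(12,14,18)$, and so on; moreover two of the four types in the conclusion, $(3,5,9)$ and $(6,8,12)$, arise exactly in this deferred regime, so your argument as written neither excludes the infinite families nor isolates these two types. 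The paper's mechanism for this regime is the missing idea: when $m<2n-2$ and $n=3k+2$, the hypothesis (\ref{Hemmicondition}) of Theorem \ref{Hemmi} with $a=0$ and parameter $n$ is automatically satisfied (no generator has even half-degree $\geq 2n-2$), so $\mathscr{P}^1(x_r)=cx_n$ with $c\not\equiv 0$; the Adem relation (\ref{Adem1}) then forces $\mathscr{P}^{3k-1}(x_r)\neq 0$, so $3n-8$ must be realized as the half-degree of a monomial of length at most $3$ in $r,n,m$, which by direct computation forces $(r,n)=(6,8)$ and hence $m=12$, killing all the residual families at once. The remaining regimes $2n-2\leq m\leq 3r$ and $m>3r$ are then closed by the elementary inequalities $m/2-1\leq 2s\leq 2e(m)+2$ and $\tfrac{2}{3}m-2\leq 2s\leq 2e(m)+2$, which is how $(3,5,9)$ is actually produced. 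Without this (or an equivalent) argument your proof does not terminate, since the residual set is infinite, not ``finitely many configurations'' as you assert.
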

\begin{proof}
Since $3\nmid n$, by Theorem \ref{mainwilk}, we have $x=\lambda n-2$ with $x$ and $\lambda$ as before. Then either $r=n-2$ or $m=2n-2$.

(1)~ $r=n-2$. If $m>3r$, $m-n>m-(m/3+2)=2/3m-2$. So we have $2/3m-2=2s<2e(m)+2$ which implies $m=9$. Then $(r,n.m)=(2,4,9)$ contradicts the fact that $m-n$ is even. 

If $2n-2=2r+2\leq m\leq 3r$, then $m/2-1\leq m-n=2s\leq 2e(m)+2$ which implies $(r,n,m)=(2,4,6)$ or $(3, 5, 9)$. 

If $m<2n-2$ and $n=3k+2$ for some $k$, then in the $\mathscr{A}_p$-module $\bar{K}=\mathbb{Z}/3\mathbb{Z}[x_r, x_n, x_m]/({\rm height} ~4)$, $\mathscr{P}^1(x_r)=c x_n$ with $c\not\equiv 0~{\rm mod}~3$ by Theorem \ref{Hemmi}. By the Adem relation
\begin{equation}\label{Adem1}
\mathscr{P}^1\mathscr{P}^3\mathscr{P}^{3k-1}=\epsilon \mathscr{P}^1\mathscr{P}^{3k+2}+2\mathscr{P}^{3k+2}\mathscr{P}^1,
\end{equation}
we have $\mathscr{P}^{3k-1}(x_r)\neq 0$ which implies $9k-2=3n-8$ has to be the degree of some monomial in $K$. Then by direct computation, we get $n=8$ and $r=6$ which implies $m<14$. Since $3|m$, we have $m=9$ or $12$. When $m=9$, $m-n=1$ is odd which is impossible. So we have $(r,n,m)=(6,8,12)$.

 If $m<2n-2$ and $n=3k+1$, then $r=3k-1$ which by Theorem \ref{mainwilk} implies $x=\lambda r-2$ with $x\in \{r, n, m\}$ and $\lambda\in \{1,2,3\}$. Then we have $r=2$ or $n=2r-2$ either of which is impossible.

(2)~$m=2n-2$. We have $m/2-1= m-n=2s\leq 2e(m)+2$ which implies $(r,n,m)=(2,4,6)$ or $(3,4,6)$.
\end{proof}

\begin{proposition}\label{prop3}
Under the condition of Theorem \ref{main2} and \textbf{Case} $\mathbf{3}$, the only possible types of $X$ are:
$$(2,3,5), (2,6,8), (3,5,7), (3,6,8), (4,6,8), (5,6,8) , (6,8,10),$$
$$(8,12,14), (12, 18, 20), (18,24,26), (21,27,29), (30,36,38).$$
\end{proposition}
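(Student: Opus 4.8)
The plan is to first turn the Case 3 hypothesis into a rigid relation and then run the same two-pronged strategy (cube detection plus Steenrod bookkeeping) used for Propositions \ref{prop1} and \ref{prop2}. Since $3\nmid m$ we have $e(m)=0$, so the bound $1\le s\le e(m)+1$ collapses to $s=1$ and hence $m-n=2$, i.e. $n=m-2$ throughout. With $n$ determined by $m$, the only freedom is $r$, and I would split on the divisibility of $r$ and $n$. Applying Wilkerson's Theorem \ref{mainwilk}(2) to $m$ only reproduces $n=m-2$ (the values $k_j=2,3$ overshoot $m$), so the content comes from the generators prime to $3$. If $3\nmid r$, then some $k_jr-2\in\{r,n,m\}$ with $k_j\in\{2,3\}$, leaving the finite menu $r=2$, $n=2r-2$, $m=2r-2$, or $m=3r-2$ (the branch $n=3r-2$ is discarded because it gives $m=3r$, contradicting $3\nmid m$). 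If $3\mid r$ but $3\nmid n$, then \ref{mainwilk}(2) applied to $n$ forces $r=n-2$, i.e. an arithmetic progression $(n-2,n,n+2)$. If $3\mid r$ and $3\mid n$, then $\gcd(r,n)\ge 3$; as the relevant gcd in Theorem \ref{main1} must divide $p-1=2$, the class $x_{2m-1}$ (with $3\nmid m$) is forced into that gcd, which can only happen if $m\le 3r$, equivalently $r\ge (n+2)/3$.

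In every branch the quantitative engine is the refined detection of Section 2 applied to the truncated polynomial algebra $K=\mathbb{Z}_{(3)}[x_r,x_n,x_m]/(\mathrm{height}\,4)$ furnished by the $K$-theoretic form of Lemma \ref{lemma1a}; I would write $K\cong A\oplus B$ and let $B$ play the role of the Steenrod-closed ideal $\bar B$ of Section 2, so as to isolate the top cube $x_m^3\in A$. Because $X$ exists this cube is nonzero modulo $B$, so condition (\ref{condition}) must fail; estimating $e\bigl(\prod_j(k^{t_i}-k^{t_j})\bigr)$ over the even monomial basis of $K$ — each factor contributing $0$ unless $2\mid(t_i-t_j)$, and then at most $e(\,\cdot\,)+1$ by (\ref{pfactor}), controlled through Corollary \ref{Leg2} exactly as in Lemmas \ref{mainlemma} and \ref{lemmacase1} — produces an inequality of the shape $m\le C_1\,e(n)+C_2\lfloor\log_3 m\rfloor+C_3$. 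Since $e(n)=O(\log_3 n)$ this bounds $m$, hence $n$ and $r$, so each branch is left with finitely many candidate triples.

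The remaining and most delicate step is to cut these finite lists down to exactly the twelve stated types, and here I would analyze the $\mathscr{A}_p$-module $\bar K=\mathbb{Z}/3\mathbb{Z}[x_r,x_n,x_m]/(\mathrm{height}\,4)$ directly. Hemmi's Theorem \ref{Hemmi} forces the operations $\mathscr{P}^{3^a}$ between generators whenever its vanishing hypothesis (\ref{Hemmicondition}) holds, and, as in the proof of Proposition \ref{prop2}, the Adem relations then propagate these into further nonzero operations whose targets must again be generator degrees; since every such target is constrained by $n=m-2$, matching degrees pins the admissible $r$ — this is what should isolate $r=n-2$ in the progression branch and select $r=n-6$ (respectively $r=3$ when $n=6$) in the doubly-divisible branch. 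Feeding in the gcd divisibility of Theorem \ref{main1} and the parity $m-n=2$ (which kills candidates of the wrong parity at each turn) then reduces each branch to its listed members. I expect this Steenrod bookkeeping to be the main obstacle: unlike the single clean cube bound, it requires tracking case by case which $\mathscr{P}^{3^a}$ are forced nonzero and where their images land, and the divisibility restriction $n'\not\equiv 0\ (\mathrm{mod}\ 3)$ in Hemmi's hypothesis means one must occasionally iterate the operations or pass to a suitable skeleton or quotient before the theorem applies.
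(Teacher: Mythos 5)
Your skeleton matches the paper's in outline --- $m-n=2$ from $e(m)=0$, Wilkerson's Theorem \ref{mainwilk} to tie $r$ to $n$ or $m$, Hemmi's Theorem \ref{Hemmi} plus Adem relations, and the Section 2 criterion as a backstop --- and your branch on divisibility of $r$ and $n$ is equivalent to the paper's split on $m\bmod 3$ (which determines $n\bmod 3$ since $n=m-2$). But the decisive step, cutting each branch down to the twelve listed types, is only gestured at, and the one concrete mechanism you state for it is wrong: you require the targets of the forced nonzero operations $\mathscr{P}^{3^a}$ to be \emph{generator} degrees. In the paper's argument the relevant module is $\bar K\cong\mathbb{Z}/3\mathbb{Z}[x_r,x_n,x_m]/(\mathrm{height}\ 4)\subset H^\ast(P_3(X))$, and a forced class such as $\mathscr{P}^{n-1}(x_n)$ need only live in the degree of some \emph{monomial} of length at most $3$; this is exactly the Diophantine condition ($3n-2$, resp. $3r-2$, is a sum of at most three of $r,n,m$) that generates the finite lists. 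Several of the types you must keep survive only because the forced operation lands on a decomposable class --- e.g.\ for $(6,8,10)$ the class $\mathscr{P}^{r-1}(x_r)$ sits in degree $2(r+m)-1$, and similarly for $(8,12,14)$ and $(18,24,26)$. Read literally, your criterion would delete these from the list and the proposition would come out false.

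A second, quantitative gap: you propose to run the Section 2 eigenvalue bound on all of $K$ in every branch as the finiteness engine. The constant in front of $\lfloor\log_3 m\rfloor$ is essentially the number of monomial degrees of the rank-$3$, height-$4$ algebra (about $19$), so the resulting bound is roughly $m\le 20\lfloor\log_3 m\rfloor+C$, i.e.\ $m$ up to order $150$; you would then face hundreds of candidate triples for your ``bookkeeping,'' with no indication of how to dispatch them. The paper avoids this: the Adem-degree constraint above already forces $r\in\{n-1,n-2,n-4,n-6\}$ or $n=6$ (and the analogous relation in the $m\equiv 1\bmod 3$ branch forces $r=n-2$ with only $(3,5,7)$ and $(6,8,10)$ surviving), and the $\psi$-operation bound is invoked only in the single residual family $(r,r+6,r+8)$ with $r=3l$, $l\equiv 1\bmod 3$, where it yields $m\le 44$. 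To make your proof work you would need to (i) replace ``generator degrees'' by ``degrees of monomials in $\bar K$'' and actually solve the resulting degree equations in each branch, and (ii) either sharpen the eigenvalue bound branch by branch (as in Lemma \ref{lemmacase1}, by discarding part of $K$) or demote it to the auxiliary role it plays in the paper.
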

\begin{proof}
Since $3\nmid m$, we have $m-n=2$. Then by Theorem \ref{Hemmi}, we have $\mathscr{P}^1(x_n)\neq 0$.

(1) If $m=3k+1$, we have $n=3k-1$ which by Theorem \ref{mainwilk} implies $x=\lambda n-2$ as before. Then either $r=n-2$, or $m=2n-2$ or $m=3n-2$, while the last two cases are easy to be checked and are impossible. For $r=n-2$, we apply Theorem \ref{Hemmi} to get $\mathscr{P}^1(x_r)\neq 0$ and again by Adem relation (\ref{Adem1}), we get $\mathscr{P}^{r-1}(x_r)\neq 0$ which implies $(r,n,m)=(3,5,7)$ or $(6,8,10)$.

(2) If $m=3k+2$, again by Adem relation (\ref{Adem1}) we have $\mathscr{P}^{n-1}(x_n)\neq 0$. By comparing the degree and application of Theorem \ref{mainwilk}, we get a list of possible types: $(2,3,5)$, $(2,6,8)$, $(3,6,8)$, $(4,6,8)$, $(5,6,8)$ $(8,12,14)$ and also a special type $(r, r+6, r+8)$ with $3|r$. For this remaining case, if $r=3l$ with $l\not\equiv 1~{\rm mod}~3$, then Theorem \ref{Hemmi} implies $\mathscr{P}^3(x_r)\neq0$. By the Adem relation 
\begin{equation}\label{Adem2}
\mathscr{P}^9\mathscr{P}^{3l-1}=\epsilon_1 \mathscr{P}^{3l+8}+\epsilon_2 \mathscr{P}^{3l+7}\mathscr{P}^{1}+\epsilon_3 \mathscr{P}^{3l+6}\mathscr{P}^{2}+\mathscr{P}^{3l+5}\mathscr{P}^{3},
\end{equation}
we have $\mathscr{P}^{3l-1}(x_r)\neq 0$ which gives $(r,n,m)=(18,24,26)$.

For $l\equiv 1~{\rm mod}~3$, we argue similarly as in Lemma \ref{lemmacase1} to get a condition $m\leq 44$. Then the possible types are $(12, 18, 20)$, $(21,27 ,29)$ and $(30,36,38)$.
\end{proof}

\begin{proposition}\label{prop4}
Under the condition of Theorem \ref{main2} and \textbf{Case} $\mathbf{4}$, the only possible types of $X$ are:
$$(2,3,4), (2,3,6).$$
\end{proposition}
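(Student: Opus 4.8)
The plan is to exploit a parity dichotomy forced by the defining conditions of \textbf{Case} $\mathbf{4}$, and then to run Wilkerson's Theorem \ref{mainwilk}(2) together with the gcd constraint of Theorem \ref{main1} through a short finite case analysis; in contrast to the earlier propositions, this case appears to require neither the Adams--Atiyah estimate of condition (\ref{condition}) nor Hemmi's Theorem \ref{Hemmi}. First I would extract the basic parity information. Since $1\leq t\leq e(m)+1$, we have $0<m-n<m-r=2t\leq 2(e(m)+1)$. If $m-n$ were even it would equal $2s$ for some $1\leq s\leq e(m)$, contradicting the hypothesis of \textbf{Case} $\mathbf{4}$; hence $m-n$ is odd. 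Consequently $m\equiv r\pmod 2$, while $n-r=(m-r)-(m-n)$ is odd, so $n$ has parity opposite to both $r$ and $m$. This single observation is what makes the case so rigid, as it forbids the even gaps between degrees that Wilkerson's part (2) tends to produce.

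Next I would split on whether $3\mid m$. If $3\nmid m$, then $e(m)=0$, so $m-r=2t$ forces $t=1$ and $r=m-2$; as $r<n<m$ this gives $n=m-1$, i.e.\ type $(m-2,m-1,m)$. Exactly one of the three consecutive integers is divisible by $3$, and it is not $m$, so exactly one of $m-2,m-1$ is prime to $3$; applying Theorem \ref{mainwilk}(2) to that generator forces a value $k(m-2)-2$ or $k(m-1)-2$ with $k\in\{1,2,3\}$ to again lie in $\{m-2,m-1,m\}$, and a direct check shows this happens only for $m=4$, yielding $(2,3,4)$.

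If $3\mid m$, I would case on the divisibility of $r$ and $n$ by $3$. By Theorem \ref{main1} we have $\gcd(r,n,m)\mid p-1=2$, so $r$ and $n$ cannot both be divisible by $3$. In each of the three remaining combinations, Theorem \ref{mainwilk}(2) applied to the generator(s) prime to $3$ yields one of the relations $m=2n-2$, $m=3n-2$, $n=kr-2$ or $m=kr-2$; the parity step kills every relation forcing an even gap (such as $m=3n-2$, $n=r+2$, or $n=3r-2$), and the mod-$3$ bookkeeping kills the rest, leaving only $r=2$. There Theorem \ref{mainwilk}(2) for $r$ holds tautologically, and the \textbf{Case} $\mathbf{4}$ bound $m-2=2t\leq 2(e(m)+1)$ together with $6\mid m$ forces $m\leq 2e(m)+4$, hence $m=6$; the last candidate $n=5$ is removed by a final application of Theorem \ref{mainwilk}(2), leaving $(2,3,6)$.

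The delicate point is the bookkeeping in the $3\mid m$ branch, in particular showing that the sub-branches $n=2r-2$ (with $r$ odd, $r\equiv 1\bmod 3$) and $m=2r-2$ (with $r$ even, $r\equiv 1\bmod 3$) are empty once $r\geq 3$. Here I would combine $m-r\leq 2(e(m)+1)$ with $3^{e(m)}\mid m$, so that $m\geq 3^{e(m)}$, to defeat the exponential-versus-linear growth: one is forced into $e(m)\geq (r-2)/2$ yet $3^{e(m)}\leq 4e(m)+4$, which is impossible once $r$ is moderately large, while the few residual small values are excluded by the parity and mod-$3$ constraints. Making these estimates interlock correctly is the only real work; everything else is mechanical.
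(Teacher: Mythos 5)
Your argument is correct and follows essentially the same route as the paper's: both rest on Theorem \ref{mainwilk}(2), the gcd constraint of Theorem \ref{main1}, and the \textbf{Case} $\mathbf{4}$ bound $m-r=2t\leq 2(e(m)+1)$, organized by the same dichotomy $3\nmid m$ versus $3\mid m$ with sub-cases on the residues of $r$ and $n$. Your one genuine improvement is the explicit parity observation: since $0<m-n<2t\leq 2(e(m)+1)$, an even $m-n$ would equal $2s$ with $1\leq s\leq e(m)$, so $m-n$ and $n-r$ are odd while $m-r$ is even. This kills the sub-branch $r=n-2$ outright, whereas the paper disposes of it by re-running the first case of Proposition \ref{prop2}, which invokes Theorem \ref{Hemmi} and an Adem relation; so your claim that \textbf{Case} $\mathbf{4}$ needs neither condition (\ref{condition}) nor Hemmi's theorem is not only defensible but a small sharpening of the paper's own treatment. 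Two points of bookkeeping. First, your appeal to $\gcd(r,n,m)\mid 2$ tacitly assumes $m\leq 3r$, which the paper establishes at the outset from $m=r+2t$ and $t\leq e(m)+1$; the conclusion you actually use, that $r$ and $n$ are not both divisible by $3$, survives without that assumption, but you should either say so or prove $m\leq 3r$ first. Second, ``the mod-$3$ bookkeeping kills the rest'' overstates what congruences alone deliver: the sub-branch $m=2n-2$ (arising when $3\mid r$ and $3\nmid n$) is consistent with every parity and mod-$3$ constraint and must be eliminated, exactly like $n=2r-2$ and $m=2r-2$, by playing $m-r\leq 2(e(m)+1)$ against $3^{e(m)}\leq m$ and checking the residual small values --- the ``exponential-versus-linear'' step you describe in your last paragraph, which therefore needs to be extended to cover this branch as well.
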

\begin{proof}
If $m>3r$, then $2t=m-r>2r$, i.e., $r<t$. Then we have $m=r+2t<3t\leq 3e(m)+3$ which is impossible. So we have $m\leq 3r$. 

If $3\nmid m$, then $m-r=2$ and $(r,n,m)=(r,r+1,r+2)$. Further, if $3|r$, then $3\nmid n$ which implies $x=\lambda n-2$ as usual.  However it is easy to check the later is impossible. Then we get $3\nmid r$ which implies $x=\lambda r-2$. At this case, the only possible type is $(r,n,m)=(2,3,4)$.

Now suppose $3|m$. If $3\nmid r$, we have $r=2$, $n=2r-2$, $n=3r-2$ or $m=2r-2$ by Theorem \ref{mainwilk}. When $r=2$, we get $(r,n,m)=(2,3,6)$ while $(2,5,6)$ is impossible since $\lambda 5-2\in \{3,8, 13\}$. When $n=2r-2$, $r=n/2+1<m/2+1$. Then $m/2-1<m-r=2t\leq 2e(m)+2$ which implies $m=6$ or $9$. For $n$ is even, $n=4$ when $m=6$ which implies $r=3$. But $3\nmid r$, so $m=6$ is impossible. If $m=9$, then we have $(r,n,m)=(4,6,9)$ or $(5,8,9)$ either of which is impossible since $9-4\neq 2t$ and $\lambda 8-2\in\{6,14,22\}$. The other two cases can be treated similarly and lead to no possible types.

 If $3|r$, then $3\nmid n$ which implies $r=n-2$ or $m=2n-2$. When $r=n-2$, we argue exactly the same as in the proof of the first case in Proposition \ref{prop2} and get none of types is possible in this case. When $m=2n-2$, we see $r<n=m/2+1$ which implies $m/2-1<m-r=2t\leq 2e(m)+2$. Again, no types survive. 
\end{proof}

We recall the following theorem of Wilkerson and Zabrodsky \cite{Wilkerson2} which is also reproved by McCleary \cite{Mccleary}, and later strengthened by Hemmi in \cite{Hemmi3} where the assumption of the primitivity of the generators has been removed:
\begin{theorem}\label{quasiregular}
Let $X$ be a simply connected ${\rm mod}~p$ $H$-space with cohomology ring $H^\ast(X,\mathbb{Z}/p\mathbb{Z})=\Lambda(x_{2m_1-1},\ldots, x_{2m_r-1})$, and $m_1\leq m_2 \leq \ldots \leq m_r$. If $m_r-m_1< 2(p-1)$, then $X$ is $p$-quasi-regular, i.e., $X$ is $p$-equivalent to a product of odd spheres and $B_n(p)$'s, where $B_n(p)$ is the $S^{2n+1}$-fibration over $S^{2n+1+2(p-1)}$ characterized by $\alpha_p$.
\end{theorem}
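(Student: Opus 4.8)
The plan is to read off the required splitting from the $\mathscr{A}_p$-module structure on the indecomposables and then realize each piece by an explicit map into $X$, assembling them with the $H$-multiplication. First I would pass to the module of indecomposables $QH^\ast(X,\mathbb{Z}/p\mathbb{Z})$, which for an $H$-space inherits a well-defined action of the Steenrod algebra. The only Steenrod power that can connect two distinct generators is $\mathscr{P}^1$: since $\mathscr{P}^j$ raises cohomological degree by $2j(p-1)$, a nontrivial relation $\mathscr{P}^j x_{2m_i-1}=x_{2m_k-1}$ in $QH^\ast$ forces $m_k-m_i=j(p-1)$, and the hypothesis $m_r-m_1<2(p-1)$ leaves only $j\in\{0,1\}$.

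Next I would use the same width bound to organize the generators. No generator can be simultaneously the target and the source of a nontrivial $\mathscr{P}^1$ among the $x_{2m_i-1}$, for that would exhibit two weights differing by $2(p-1)$, contradicting $m_r-m_1<2(p-1)$. Hence under $\mathscr{P}^1$ the generators partition into singletons and linked pairs $\{m,\,m+(p-1)\}$. A singleton in degree $2m-1$ is the algebraic signature of an odd sphere $S^{2m-1}$, while a linked pair $\{m,\,m+(p-1)\}$ with $\mathscr{P}^1$ nontrivial is exactly the signature of $B_{m-1}(p)$, whose mod $p$ cohomology is $\Lambda(x_{2m-1},x_{2(m+p-1)-1})$ with $\mathscr{P}^1$ joining the two classes and whose attaching is characterized by $\alpha_p$.

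Then I would realize the pieces geometrically. For each singleton I would choose a map $S^{2m-1}\to X$ detecting the corresponding generator, and for each $\mathscr{P}^1$-pair a map $B_{m-1}(p)\to X$ carrying the two generators of $B_{m-1}(p)$ to the linked pair; the latter exists because $B_{m-1}(p)$ is built from two cells whose attaching is governed by $\alpha_p$, matched by the nontrivial $\mathscr{P}^1$ in $X$, so the extension over the top cell is unobstructed in the relevant range. Multiplying all of these maps together with the iterated $H$-multiplication of $X$ produces
\[
\Phi:\ \prod_i S^{2m_i-1}\times\prod_j B_{n_j}(p)\longrightarrow X .
\]
Both sides carry exterior mod $p$ cohomology with the same generator degrees and the same $\mathscr{P}^1$-structure, and by construction $\Phi^\ast$ sends each generator to the corresponding class modulo decomposables; hence $\Phi^\ast$ is a ring isomorphism on $H^\ast(-,\mathbb{Z}/p\mathbb{Z})$. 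Since every space involved is simply connected, the mod $p$ Whitehead theorem upgrades this to a $p$-equivalence, which is precisely $p$-quasi-regularity.

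I expect the main obstacle to be the realization step, not the degree bookkeeping. The delicate point is to choose the detecting maps compatibly so that the single product map $\Phi$ induces an isomorphism rather than merely hitting each generator separately; this requires controlling the cross-terms produced by the $H$-multiplication, which one does using the primitivity of the low-dimensional generators in the cohomology of an $H$-space. Equally subtle is verifying that a $\mathscr{P}^1$-linked pair genuinely arises from a single $B_{m-1}(p)$ factor, rather than being explained by two independent spheres, so that the characterization of $B_{m-1}(p)$ by $\alpha_p$ and the nonvanishing of $\mathscr{P}^1$ must be used in an essential way.
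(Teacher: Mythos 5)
A preliminary remark: the paper does not prove this statement at all --- it is quoted as a known theorem of Wilkerson and Zabrodsky, reproved by McCleary and strengthened by Hemmi, so there is no in-paper argument to compare yours against. Judged on its own terms, your degree bookkeeping is correct: since $\mathscr{P}^j$ raises degree by $2j(p-1)$, the bound $m_r-m_1<2(p-1)$ forces $j\le 1$ for any $\mathscr{P}^j$ linking two generators in $QH^\ast$, and it rules out chains $x\mapsto\mathscr{P}^1x\mapsto\mathscr{P}^1\mathscr{P}^1x$ among generators, so the indecomposables do split into $\mathscr{P}^1$-singletons and $\mathscr{P}^1$-linked pairs. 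The cohomological endgame (surjectivity onto indecomposables of two exterior algebras with the same Poincar\'e series, then the mod $p$ Whitehead theorem) is also fine.

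The genuine gap is the realization step, which you correctly identify as ``the main obstacle'' and then assert rather than prove. First, the existence of a map $S^{2m-1}\to X$ detecting a given generator is not automatic for a finite $H$-space: it amounts to the generator being spherical, and this is exactly where the hypothesis must be used geometrically, not just on Steenrod operations. The standard route is to produce a generating complex $A\subset X$ with one cell per generator and to split $A$ $p$-locally as a wedge of spheres and two-cell complexes $S^{2n+1}\cup_{\alpha_1}e^{2n+2p-1}$; this splitting works because, in the range of dimensions permitted by $m_r-m_1<2(p-1)$, the $p$-components of the relevant homotopy groups of the lower skeleta are either zero or generated by $\alpha_1$, which is detected by the $\mathscr{P}^1$ you have already organized. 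None of this appears in your argument. Second, even granting a map from the two-cell complex, $B_n(p)$ has further cells (it is the total space of a sphere bundle, not a two-cell complex), and the extension of the map over those cells is an obstruction-theoretic claim you do not justify. Third, the version stated here, following Hemmi, makes no primitivity assumption on the generators, yet your control of the cross-terms of $\Phi$ invokes ``the primitivity of the low-dimensional generators''; for degree reasons only the generators below dimension $2(2m_1-1)$ are automatically primitive, so for the top generators this is precisely the hypothesis that the strengthened theorem dispenses with and that your argument would need to supply or circumvent.
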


Now we are ready to prove Theorem \ref{main2}.

\noindent{\bf Proof of Theorem \ref{main2}}\quad  We collect all the types obtained from Proposition \ref{prop1}, \ref{prop2}, \ref{prop3} and \ref{prop4}, and prove the theorem case by case.

First, we notice that $(2,3,4)$, $(2,3,5)$, $(3,4,6)$ and $(5,6,8)$ are quasi-regular by Theorem \ref{quasiregular}. 

If $(r,n,m)=(4,6,8)$, we already know $\mathscr{P}^1(x_n)=x_m$ in 
$\bar{K}=\mathbb{Z}/3\mathbb{Z}[x_r, x_n, x_m]/$ $({\rm height} ~4)$. Then by degree reason 
$$\mathscr{P}^4(x_r)=\mathscr{P}^1\mathscr{P}^3(x_r)=\mathscr{P}^1(\lambda x_rx_n)=\lambda \mathscr{P}^1(x_r)x_n+\lambda x_rx_m,$$
 which contradicts that 
$\mathscr{P}^4(x_r)=x_r^3$. So $(4,6,8)$ cannot be the type of $X$.

If $(r,n,m)=(3,5,9)$, we still have $\mathscr{P}^1(x_r)=x_n$ by Theorem \ref{Hemmi}. Then by Adem relation \ref{Adem1}, we have $\mathscr{P}^2(x_r)\neq 0$ which is impossible since $K_{7}=0$.

If $(r,n,m)=(8,12,14)$, we know $\mathscr{P}^1(x_n)=x_m$ in $\bar{K}$. Then by degree reason we have
$$
2\mathscr{P}^8(x_r)
=\mathscr{P}^1\mathscr{P}^1\mathscr{P}^6(x_r)
=\mathscr{P}^1\mathscr{P}^1(\lambda x_rx_n)
=\lambda x_r\mathscr{P}^1(x_m),
$$
which implies $\mathscr{P}^1(x_m)=\mu x_r^2$ with $3\nmid \mu$. On the other hand, we have $\mathscr{P}^{11}(x_n)\neq 0$ from the proof of Proposition \ref{prop3}, which implies $\mathscr{P}^1\neq 0: \bar{K}_{30}=\mathbb{Z}/p\mathbb{Z}(x_r^2x_m)\rightarrow \bar{K}_{32}$. But $\mathscr{P}^1(x_r^2x_m)=x_r^2\mathscr{P}^1(x_m)=0$ and then $(r,n,m)=(8,12,14)$ is impossible.

If $(r,n,m)=(10,12,18)$, we have $\mathscr{P}^1\mathscr{P}^9(x_r)=\mathscr{P}^{10}(x_r)=x_r^3$ which implies $\mathscr{P}^1(x_rx_m)=x_r\mathscr{P}^1(x_m)+\mathscr{P}^1(x_r)x_m=\lambda x_r^3$ with $3\nmid \lambda$. Then we have $\mathscr{P}^1(x_r)=0$ and $\mathscr{P}^1(x_m)=\lambda x_r^2$. Then by Adem relation
\begin{equation}\label{Adem3}
\mathscr{P}^3\mathscr{P}^7=-\mathscr{P}^{10}+\mathscr{P}^9\mathscr{P}^1,
\end{equation} 
we have $\mathscr{P}^3(x_n^2)=\mu x_r^3$ with $3\nmid \mu$. However, $\mathscr{P}^3(x_n^2)=2x_n\mathscr{P}^3(x_n)$ is not equal to $\mu x_r^3$, so $(10,12,18)$ can not be the type of $X$.

If $(r,n,m)=(12,18,20)$, we have $\mathscr{P}^1(x_n)=x_m$. Again, by Adem relation \ref{Adem1}, we have 
$\mathscr{P}^{17}(x_n)\neq 0$ and $\mathscr{P}^3\neq 0: \bar{K}_{52}=\mathbb{Z}/p\mathbb{Z}(x_rx_m^2)\rightarrow \bar{K}_{58}=\mathbb{Z}/p\mathbb{Z}(x_nx_m^2)$ which implies $\mathscr{P}^3(x_r)={x_n}$. 
However the Adem relation
\begin{equation}\label{Adem5}
\mathscr{P}^3\mathscr{P}^9=\mathscr{P}^{12}+\mathscr{P}^{11}\mathscr{P}^1
\end{equation}
implies $\mathscr{P}^3(x_rx_n)=\pm x_r^3$ which contradicts $\mathscr{P}^3(x_rx_n)=x_r\mathscr{P}^3(x_n)+\mathscr{P}^3(x_r)x_n=x_r\mathscr{P}^3(x_n)+x_n^2$. So $(r,n,m)=(12,18,20)$ is impossible.

For $(r,n,m)=(2, 12, 18)$, or $(7,12,18)$, we first prove the following lemma:

\begin{lemma}
Let $X$ be a $p$-local $A_p$-space with cohomology ring $H^\ast(X,\mathbb{Z}/p\mathbb{Z})\cong \Lambda(x_{2m_1-1},\ldots, x_{2m_r-1})$, such that each $x_{2m_i-1}$ is $A_p$-primitive, $m_1\leq m_j$, $\forall j$ and $p<m_r$. Then there is a $x_{2m_k-1}$ such that $\mathscr{P}^i(x_{2m_k-1})=x_{2m_r-1}$ for some suitable nonzero $i$. 
\end{lemma}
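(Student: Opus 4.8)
The plan is to push the statement into the cohomology of the $p$th projective space $P_p(X)$, where the exterior generators $x_{2m_i-1}$ become polynomial generators $y_{2m_i}$ carrying genuine $p$th powers, and then to detect the reduced power that hits the top class. By Lemma \ref{lemma1a} (whose hypothesis is exactly the assumed $A_p$-primitivity) we have $H^\ast(P_p(X),\mathbb{Z}/p\mathbb{Z})\cong A\oplus B$ with $A=\mathbb{Z}/p\mathbb{Z}[y_{2m_1},\ldots,y_{2m_r}]/({\rm height}~p+1)$, $A^{+}\cdot B=0$, and $y_{2m_i}$ restricting to $\sigma^\ast(x_{2m_i-1})$ along $\Sigma X\hookrightarrow P_p(X)$. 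Since reduced powers commute with $\sigma^\ast$ and decomposable classes restrict to zero on $\Sigma X$, an equality $\mathscr{P}^i(x_{2m_k-1})=x_{2m_r-1}$ is equivalent to $\mathscr{P}^i(y_{2m_k})\equiv y_{2m_r}$ modulo $DH^\ast(P_p(X))$. So it suffices to show that the top generator $y_{2m_r}$ lies in the image of a single reduced power on indecomposables. The key extra fact is that $y_{2m_r}^p\neq 0$ in $A$: it has length exactly $p$ and hence survives the truncation at height $p+1$, and by the unstable Frobenius relation $\mathscr{P}^{m_r}(y_{2m_r})=y_{2m_r}^p$.

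Write $m_r=p^a l$ with $a=e(m_r)$ and $p\nmid l$. When $l>p$, Hemmi's Theorem \ref{Hemmi} applies directly with $n=l$ and this $a$: its vanishing hypothesis asks that $QH^{2(p^a(p-1)t)-1}=0$ for $t\geq l-1$, and since $p^a(p-1)(l-1)>p^a l=m_r$ for $l\geq 2$, all of these degrees lie strictly above the top generator and so carry no indecomposables. Hence $\mathscr{P}^{p^a}\colon QH^{2(p^a(l-2))-1}\to QH^{2m_r-1}$ is onto; as the target is nonzero, the source is nonzero, and some generator with $m_k=m_r-2p^a$ satisfies $\mathscr{P}^{p^a}(x_{2m_k-1})=x_{2m_r-1}$, which is the assertion with $i=p^a$.

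The main obstacle is the complementary range $l\leq p-1$ (forcing $a\geq 1$), where no admissible factorization $m_r=p^{a'}n$ has $n>p$ with $p\nmid n$, so Theorem \ref{Hemmi} cannot be quoted; this is exactly the case $m_r=18=2\cdot 3^2$ needed for $(2,12,18)$ and $(7,12,18)$, and also the case $m_r=6$ realized by $Sp(3)$, so the conclusion genuinely holds and must be proved by hand. Here I would first invoke Wilkerson's Theorem \ref{mainwilk}(1) to produce an honest generator $x_{2m_k-1}$ with $m_r-m_k=s(p-1)$, $1\leq s\leq e(m_r)+1$, fixing both the source and the candidate index $i=s$; in the cases at issue $s$ turns out to be a power of $p$. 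To see that $\mathscr{P}^{s}(y_{2m_k})$ really has a nonzero $y_{2m_r}$-component I would use the more explicit form of Hemmi's construction (Section $8$ of \cite{Hemmi2}) together with the Adem relations, expanding the relevant top-power identity so as to isolate the factor $\mathscr{P}^{s}$ and then reading off the coefficient of $y_{2m_r}$; this is feasible because in total degree $2m_r$ the only indecomposable of $A$ is $y_{2m_r}$, every competing term from the Adem and Cartan expansions being divisible by a lower generator and hence invisible in $Q$. The genuinely delicate point, and where the difficulty concentrates, is verifying that the surviving Adem--Cartan coefficient is a unit modulo $p$, precisely the binomial-coefficient bookkeeping of the kind carried out in Propositions \ref{prop1}--\ref{prop4}.
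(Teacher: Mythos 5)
There is a genuine gap: your argument proves the lemma only when $m_r=p^a l$ with $p\nmid l$ and $l>p$, and leaves the complementary case $l<p$ as an unexecuted plan. In that second case you correctly observe that Theorem \ref{Hemmi} does not apply, and what you propose instead --- take the generator supplied by Theorem \ref{mainwilk}(1), set $i=s$, and then check that the surviving Adem--Cartan coefficient is a unit modulo $p$ --- is exactly the content of the lemma, deferred rather than proved. This matters because the paper invokes the lemma precisely for the types $(2,12,18)$, $(7,12,18)$ and $(2,3,6)$, where $m_r=18=2\cdot 3^2$ and $m_r=6=2\cdot 3$ both have $l=2<p=3$; so the case you leave open is the only case in which the lemma is actually used. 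Two further caveats even in the range you do treat: Theorem \ref{Hemmi} as quoted in the paper is a mod $3$ statement, so applying it for general $p$ needs a separate citation, and surjectivity of $\mathscr{P}^{p^a}$ on $QH^\ast$ only yields $\mathscr{P}^{p^a}$ of some linear combination of generators hitting $x_{2m_r-1}$ modulo decomposables, so a word about renormalizing the generators is needed to reach the stated equality $\mathscr{P}^i(x_{2m_k-1})=x_{2m_r-1}$.

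For comparison, the paper's proof is a short reduction to $K$-theory that handles all cases uniformly: Lemma $4.4$ of \cite{Wilkerson3} produces, inside the $\{\psi^p\}$-submodule $\mathbb{Z}_{(p)}[x_{m_1},\ldots,x_{m_r}]/(\mathrm{height}~p+1)$ of $K(P_p(X))\otimes\mathbb{Z}_{(p)}$, a generator $x_{m_k}$ with $\psi^p(x_{m_k})=\lambda x_{m_r}+\mathrm{else}$ and $\lambda\neq 0$ of the correct $p$-adic valuation, and Theorem $6.5$ of \cite{Atiyah} translates that component of $\psi^p$ into a nonzero reduced power on the cohomology level. If you want to salvage your approach you would need either to carry out the unit-coefficient computation in the $l<p$ range in full, or to replace that step by Wilkerson's $K$-theoretic lemma as the paper does.
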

\begin{proof}
This is essentially Lemma $4.4$ in \cite{Wilkerson3}, which claims that in the $\{\psi^p\}$-submodule $K= \mathbb{Z}_{(p)}[x_{m_1},\ldots, x_{m_r}]/({\rm height}~p+1)$ of $K(P_p(X))\otimes \mathbb{Z}_{(p)}$, there is a $x_{m_k}$ such that $\psi^p(x_{m_k})=\lambda x_{m_r}+{\rm else}$ with $\lambda\neq0$, for in Theorem $6.5$ of \cite{Atiyah}, Atiyah has shown that if $\psi^p(x_q)=\sum\limits_i p^{q-i}x_i$, then $\mathscr{P}^i(\bar{x}_q)=\bar{x}_i$ holds on the cohomology level.
\end{proof}

Now we return to the proof Theorem \ref{main2}. Using above lemma, we see $\mathscr{P}^3(x_{12})=x_{18}$ holds in $\bar{K}\subset H^\ast(P_3(X))$ for both mentioned cases. Then we apply Adem relation (\ref{Adem5}) to $x_{12}$. Since in both cases $\mathscr{P}^{11}\mathscr{P}^1(x_{12})=0$, we have $\mathscr{P}^3\mathscr{P}^9(x_{12})=\pm x_{12}^3$. However, $\bar{K}_{30}=\mathbb{Z}/p\mathbb{Z}(x_{12}x_{18})$, and since $\bar{K}$ is truncated, $\mathscr{P}^3(x_{12}x_{18})=x_{12}\mathscr{P}^3(x_{18})+\mathscr{P}^3(x_{12})x_{18}=x_{12}\mathscr{P}^3(x_{18})+x_{18}^2$ which is not equal to $\pm x_{12}^3$. Accordingly, either case is impossible to be the type of $X$.

We notice that $(r,n,m)=(2,3,6)$ is impossible directly by the above lemma.


For the remaining cases which do not appear in the final list, we can check whether the condition (\ref{condition}) fails or not in an appropriate $\{\psi^k\}$-module $K^\prime$ constructed from $K$ (with the help of computer), and find that when $(r,n,m)=(2, 3, 9)$, $(2, 21, 27)$, $(2,30,36)$, $(2,39,45)$, $(18,24,26)$, $(16,30,36)$, $(19, 30,36)$, $(21,27,29)$ or $(30,36,38)$, (\ref{condition}) holds which implies $X$ can not be ${\rm mod}~3$ $A_3$-space.

{\protect\vspace{-2pt}\rightline{$\square$}}

\section*{Acknowledgements}
The authors would like to thank Prof. Stephen D. Theriault and Prof. Mamoru Mimura for helpful discussions and comments, and are also indebted to Prof. John R. Harper for suggesting the reference \cite{Hemmi} and valuable knowledge about $H$-space of rank $p$ and higher associativity. We wish to thank the referee most warmly for his/her suggestions and comments on using the modified projective space of Hemmi \cite{Hemmi2} which has essentially improved the article, and also the careful reading of our manuscript. We are also indebted to Prof. J\'{e}r\^{o}me Scherer and Prof. Fred Cohen for careful reading of the manuscript and many valuable suggestions which have improved the paper. 

The authors are partially supported by the Singapore Ministry of Education research grant (AcRF Tier 1 WBS No. R-146-000-222-112). The second author is also supported by a grant (No. 11329101) of NSFC of China.


\begin{thebibliography}{10}

\bibitem{Adams1} \textbf{J. F. Adams}, {\em On the non-existence of elements of Hopf invariant one}, Ann. of Math. \textbf{72} (1960), 20-104.

\bibitem{Adams2} \textbf{J. F. Adams}, {\em The spheres, considered as an $H$-space mod $p$}, Quart. J. Math. Oxford ($2$) \textbf{12} (1961), 52-60.

\bibitem{Adams2a} \textbf{J. F. Adams}, {\em $H$-spaces with few cells}, Topology \textbf{1} (1962), 62-72.

\bibitem{Adams3} \textbf{J. F. Adams, M. F. Atiyah}, {\em $K$-theory and the Hopf invariant}, Quart. J. Math. Oxford ($2$), \textbf{17} (1966), 31-38.

\bibitem{Atiyah} \textbf{M. F. Atiyah}, {\em Power operations in $K$-theory}, Quart. J. Math. Oxford ($2$) \textbf{17} (1966), 165-93.

\bibitem{Body} \textbf{R. A. Body, R. R. Douglas}, {\em Homotopy types within a rational homotopy type}, Topology, \textbf{13} (1974), 209-214.

\bibitem{Douglas} \textbf{D. D. Douglas, F. Sigrist}, {\em Sphere bundles over spheres and $H$-spaces}, Topology \textbf{8} (1969), 115-118.

\bibitem{Grbic} \textbf{J. Grbic, J. Harper, M. Mimura, S. Theriault, J. Wu}, {\em Rank $p-1$ mod-$p$ $H$-spaces}, Israel J. Math., \textbf{194} (2013), 641-688.

\bibitem{Nancy} \textbf{Nancy L. Hagelgans}, {\em Local spaces with three cells as $H$-spaces}, Can. J. Math., Vol. \MyRoman{31}, No. \textbf{6} (1979), 1293-1306.

\bibitem{Harper} \textbf{J. Harper}, {\em The ${\rm mod}~3$ homotopy type of $F_4$}, Localization in Group Theory and Homotopy Theory, Lecture Notes in Math. \textbf{418}, Springer, (1974), 58-67.

\bibitem{Harper2} \textbf{J. Harper, A. Zabrodsky}, {\em Evaluating a $p$-th order cohomology operation}, Publications Matem{\' a}tiquess \textbf{32} (1988), 61-78.

\bibitem{Hemmi} \textbf{Y. Hemmi}, {\em Homotopy associative finite $H$-spaces and the mod $3$ reduced power operations}, Publ. RIMS, Kyoto Univ. \textbf{23} (1987), 1071-1084.

\bibitem{Hemmi2} \textbf{Y. Hemmi}, {\em On exterior $A_n$-spaces and modified projective spaces}, Hiroshima Math. J. \textbf{24} (1994), 583-605.

\bibitem{Hemmi3} \textbf{Y. Hemmi}, {\em Mod p decompositions of mod p finite H-spaces}, Mem. Fac. Sci. Kochi Univ. Ser. A \textbf{22} (2001), 59-65.

\bibitem{Hubbuck} \textbf{J. R. Hubbuck}, {\em Generalized cohomology operations and $H$-spaces of low rank}, Trans. Amer. Math. Soc. \textbf{141} (1969), 335-360.

\bibitem{Hubbuck2} \textbf{J. R. Hubbuck, M. Mimura}, {\em The number of mod $p$ $A(p)$-spaces}, Illinois J. Math. \textbf{33} ($1$) (1989), 162-169.

\bibitem{Iwase} \textbf{N. Iwase}, {\em On the $K$-ring structure of $X$-projective $n$-space},
Mem. Fac. Sci. Kyushu Univ. Ser. A \textbf{38} (1984), no. 2, 285-297.

\bibitem{Mccleary} \textbf{J. McCleary}, {\em Mod $p$ decomposition of $H$-space; another approach}, Pacific J. Math. \textbf{87} ($2$) (1980), 373-388.

\bibitem{Mimura} \textbf{M. Mimura, G. Nishida, H. Toda}, {\em On the classification of $H$-spaces of rank $2$}, J. Math. Kyoto Univ. \textbf{13} (1973), 611-627.

\bibitem{Mimura2} \textbf{M. Mimura, G. Nishida, H. Toda}, {\em Mod $p$ decomposition of compact Lie groups}, Publ. RIMS, Kyoto Univ. \textbf{13} (1977), 627-680.


\bibitem{Stasheff} \textbf{J. D. Stasheff}, {\em Homotopy associativity of $H$-spaces: \MyRoman{1} and \MyRoman{2}}, Trans. Amer. Math. Soc. \textbf{108} (1963), 275-292, 293-312.

\bibitem{Stasheff2} \textbf{J. D. Stasheff}, {\em Mod $p$ decomposition of Lie groups}, Localization in Group Theory and Homotopy theory, Lecture Notes in Math. \textbf{418}, Springer, (1974), 142-149. 

\bibitem{Wilkerson1} \textbf{C. Wilkerson}, {\em Spheres which are loop spaces mod $p$}, Proc. Amer. Math. Soc. \textbf{39} (3) (1973), 616-618.

\bibitem{Wilkerson3} \textbf{C. Wilkerson}, {\em $K$-theory operations in ${\rm mod}~p$ loop spaces}, Math. Z. \textbf{132} (1973), 29-44.

\bibitem{Wilkerson2} \textbf{C. Wilkerson}, {\em Mod $p$ decomposition of mod $p$ $H$-spaces}, Lecture Notes in Math. \textbf{428}, Springer, Berlin, (1974), 52-57.

\bibitem{Zabro} \textbf{A. Zabrodsky}, {\em Homotopy associativity and finite CW complex}, Topology \textbf{9} (1970), 121-128.

\bibitem{Zabro1} \textbf{A. Zabrodsky}, {\em The classification of simply connected $H$-spaces with three cells \MyRoman{2}}, Math. Scand. \textbf{30} (1972), 211-222.

\bibitem{Zabro3} \textbf{A. Zarbrodsky}, {\em On the realization of invariant subgroups of $\pi_\ast(X)$}, Tran. Amer. Math. Soc. \textbf{285} (2) (1984), 467-496.

\end{thebibliography}
\end{document}